\newsavebox{\measure@tikzpicture}
  \def\tikz@width{#1}%
\pgfplotsset{compat=1.10}
\tikzset{
    state/.style={
           rectangle,
           rounded corners,
           draw=black, very thick,
           minimum height=2em,
           inner sep=2pt,
           text centered,
           },
}
\def\ds{\displaystyle}
\def\eps{{\varepsilon}}
\def\R{\mathbb{R}}
\definecolor{racing}{rgb}{0.7,0.1,0.2}
\definecolor{french}{rgb}{0,0.2,0.7}
\newcommand{\mean}[1]{\,-\hskip-1.08em\int_{#1}} 
\def\XXint#1#2#3{{\setbox0=\hbox{$#1{#2#3}{\int}$ }
\vcenter{\hbox{$#2#3$ }}\kern-.6\wd0}}
\DeclareMathOperator*{\osc}{osc}
\newtheorem{proposition}{Proposition}[section]
\newtheorem{theorem}[proposition]{Theorem}
\newtheorem{lemma}[proposition]{Lemma}
\theoremstyle{definition}
\newtheorem{definition}[proposition]{Definition}
\newtheorem{remark}[proposition]{Remark}
\title{The Boundary Harnack principle on optimal domains}
\author{Francesco Paolo Maiale, Giorgio Tortone, Bozhidar Velichkov}
\address {Francesco Paolo Maiale \newline \indent
	Scuola Normale Superiore\newline \indent
	Piazza dei Cavalieri 7, 56126 Pisa, Italy}
\email{francesco.maiale@sns.it}
\address {Giorgio Tortone \newline \indent
	Dipartimento di Matematica, Universit\`a di Pisa \newline \indent
	Largo Bruno Pontecorvo, 5, I--56127 Pisa, Italy}
\email{giorgio.tortone@dm.unipi.it}
\address {Bozhidar Velichkov \newline \indent
Dipartimento di Matematica, Universit\`a di Pisa \newline \indent
Largo Bruno Pontecorvo, 5, I--56127 Pisa, Italy}
\email{bozhidar.velichkov@unipi.it}
\thanks{{\bf Acknowledgments.}
GT and BV are supported by the European Research Council (ERC), under the European Union's Horizon 2020 research and innovation programme, through the project ERC VAREG - \it Variational approach to the regularity of the free boundaries \rm (grant agreement No. 853404).}
\subjclass[2010]{35R35, 49Q10}
\begin{document}

\begin{abstract}
We give a short and self-contained proof of the Boundary Harnack Inequality for a class of domains satisfying some geometric conditions given in terms of a state function that behaves as the distance function to the boundary, is subharmonic inside the domain and satisfies some suitable estimates on the measure of its level sets. We also discuss the applications of this result to some shape optimization and free boundary problems.
\end{abstract}
\maketitle

\section{Introduction}\label{s:intro}
In this paper, we prove a Boundary Harnack Inequality  for domains satisfying some geometric conditions, which naturally arise in shape optimization and free boundary problems. One consequence of our analysis is that if a domain $\Omega\subset B_1$ admits a function, which is harmonic in $\Omega$, vanishes on $\partial\Omega$, behaves as the distance function to the boundary and the measure of its level sets decays linearly (see \cref{t:main} for the complete list of hypotheses), then the Boundary Harnack Inequality  holds on $\Omega$. This general principle is well-known in the free boundary community and was used for instance in \cite{ACS}, \cite{CSY} and \cite{mtv, mtv2}. In all these cases the strategy of the proof is to show that the optimal domain $\Omega$ is NTA and then to obtain the Boundary Harnack Inequality  by applying the well-known result of Jerison and K\"enig \cite{JK}. \smallskip

In this paper we give a direct proof of the Boudary Harnack inequality, without passing through the result for NTA domains (\cite{JK}). Our proof is completely self-contained and essentially uses only the mean value formula for harmonic functions and the classical Alt-Caffarelli-Friedman monotonicity formula for subharmonic functions. In \cref{s:harnack} we prove interior Harnack inequalities, which are the key part of the proof and encode the geometric properties of the domains. In \cref{sub:proof-of-T1}, we prove the Boundary Harnack Inequality  (\cref{t:main}); we follow step-by-step the strategy from the recent paper of De Silva and Savin \cite{DS} and the results from \cref{s:harnack}. In \cref{s:proof-of-T2}, for the sake of completeness, we show how to deduce the Boundary Harnack Principle (\cref{d:boundary-harnack}) from the Boundary Harnack Inequality  (\cref{t:main2}).\smallskip

The Boundary Harnack principle is a key tool in proving the $C^{1,\alpha}$ regularity of free boundaries arising in vectorial free boundary and shape optimization problems. We discuss some applications in \cref{sub:applications}.\smallskip

Throughout this paper $\Omega$ will be an open subset of the unit ball $B_1\subset\R^d$.
\begin{definition}[Boundary Harnack Principle]\label{d:boundary-harnack}
We say that the Boundary Harnack Principle holds in $\Omega$, if there is a constant $\alpha>0$ such that, for every
$$u:B_1\to\R\qquad\text{and}\qquad v:B_1\to\R$$
which are:
\begin{itemize}
\item continuous on $B_1$,
\item positive and harmonic in $\Omega\cap B_1$,
\item vanishing identically on $B_1\setminus\Omega$,
\end{itemize}	
the ratio $\,\ds\frac{u}{v}:\Omega\to\R\,$
can be extended to a $C^{0,\alpha}$-regular function on $B_{\rho}\cap\overline\Omega$, for some $\rho\in (0,1/2)$.
\end{definition}

The aim of this paper is to prove the following theorem.

\begin{theorem}\label{t:main}	
Let $\Omega\subset B_1$ be an open set with $0\in\partial\Omega$ and $\phi:B_1\to\R$ a continuous function such that:
\begin{enumerate}[\rm(a)]
\item\label{item:1-positivity} $\phi> 0$ on $\Omega$ and $\phi\equiv0$ on $B_1\setminus\Omega$;
\item\label{item:2-lipschitz} $\phi$ is $L$-Lipschitz continuous on $B_1$, where $L>0$ is a given constant;
\item\label{item:3-distance} $\phi$ behaves as the distance function to the set $B_1\setminus\Omega$; precisely, there is a constant $\kappa>0$ such that
$$\phi\ge \kappa \,\text{\rm dist}_{B_1\setminus\Omega}\quad\text{in}\quad B_{\sfrac12}\ ;$$
\item\label{item:4-subharmonicity} we have the inequality
$$\Delta \phi\ge 0\quad\text{in sense of distributions in }B_1;$$
\item\label{item:5-density} there is a constant $\mu>0$ such that for every $x_0\in\partial\Omega\cap B_{1}$, we have
$$|B_r(x_0)\setminus \Omega|\ge \mu |B_r(x_0)|\quad\text{for every}\quad r\in(0,1-|x_0|);$$
\item\label{item:6-levels} there is a constant $\Lambda>0$ such that for every $x_0\in\partial\Omega\cap B_{1}$ and every $r\in(0,1-|x_0|)$, we have
$$\big|\{0<\phi<rt\}\cap B_{r}(x_0)\big|\le \Lambda t|B_r|\quad\text{for every}\quad t>0.$$
\item\label{item:7-nondegeneracy} there is a constant $\eta>0$ such that for every $x_0\in\partial\Omega\cap B_{1}$ and every $r\in(0,1-|x_0|)$, we have
$$\sup_{B_r(x_0)}\phi\ge \eta r.$$
\end{enumerate}	
Then the Boundary Harnack Principle holds in $\Omega$ in the sense of \cref{d:boundary-harnack}.
\end{theorem}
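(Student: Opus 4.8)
The plan is to follow the strategy of De Silva and Savin \cite{DS}, reducing the Boundary Harnack Principle to a quantitative Boundary Harnack Inequality (the forthcoming \cref{t:main2}) through an iteration scheme, while the bulk of the geometric content is packed into interior Harnack-type estimates. First I would establish, using only hypotheses \eqref{item:1-positivity}--\eqref{item:7-nondegeneracy}, the following key interior estimates (these are the contents of \cref{s:harnack}): (i) a Harnack chain-type inequality for positive harmonic functions on $\Omega$, obtained by covering a Harnack chain joining two points by balls on which $\phi$ is comparable to the distance to $\partial\Omega$ (by \eqref{item:3-distance} and \eqref{item:2-lipschitz}); (ii) a \emph{Carleson-type estimate}, namely that for a positive harmonic $u$ vanishing on $B_1\setminus\Omega$ one has $u(x)\lesssim u(A_r(x_0))$ for $x\in B_{r/2}(x_0)\cap\Omega$, where $A_r(x_0)$ is a suitable non-tangential reference point; and (iii) a two-sided comparison of $u$ with $\phi$ near the boundary, i.e.\ $c\,\phi(x)\le u(x)\le C\,\phi(x)$ on $B_{r/2}(x_0)\cap\Omega$ after normalization at a reference point. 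The nondegeneracy \eqref{item:7-nondegeneracy}, density \eqref{item:5-density} and the level-set bound \eqref{item:6-levels} enter precisely here: they guarantee that $\Omega$ is ``fat'' enough that a non-tangential point exists and that mass does not concentrate near the free boundary, which is what lets the mean value property of harmonic functions (combined with the vanishing of $u$ on a set of positive density) bound $u$ from above.

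The second ingredient is the Alt--Caffarelli--Friedman monotonicity formula. The idea is that, given two positive harmonic functions $u,v$ on $\Omega\cap B_1$ vanishing on $B_1\setminus\Omega$, one forms the ACF functional built out of $u$ and $v$ (or out of $u$ and a translate/competitor), and uses hypothesis \eqref{item:4-subharmonicity} together with \eqref{item:5-density}: the density estimate forces the first eigenvalue of the spherical domains $\{u>0\}\cap\partial B_r$ to be bounded away from the ``half-space'' value, giving a genuine decay rate in the ACF functional. Concretely, I would show there exists $\alpha>0$ such that the rescaled oscillation of $u/v$ over $B_r\cap\Omega$ decays like $r^\alpha$; this is the statement that the ACF-type monotonicity quantity, which controls $\int |\nabla u|^2/|x|^{d-2}\cdot\int|\nabla v|^2/|x|^{d-2}$, has a power gain. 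This is where the ``subharmonic + linear decay of level sets'' hypotheses are genuinely used, and it replaces the NTA + Jerison--Kenig route taken in \cite{ACS,CSY,mtv,mtv2}.

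With (i)--(iii) and the ACF decay in hand, the De Silva--Savin iteration runs as follows. After normalizing $u$ and $v$ at a non-tangential point, one shows an \emph{improvement of flatness} for the ratio $w=u/v$: if $a\phi\le u,v\le b\phi$ on $B_r\cap\Omega$ with $b-a$ small, then on $B_{r\theta}\cap\Omega$ one can improve the spread $b-a$ by a fixed factor $\theta_0<1$, at the cost of changing the multiplicative constants in a controlled way. The linear-comparison functions here are just multiples of $\phi$ itself, so the ``linear'' approximation is replaced by comparison with $\phi$, and the error is controlled by the ACF decay together with the Carleson estimate. Iterating this dichotomy across dyadic scales $r=\theta_0^k$ gives geometric decay of $\osc_{B_r\cap\Omega} (u/v)$, hence $u/v\in C^{0,\alpha}$ up to $\partial\Omega$ on $B_{1/2}\cap\overline\Omega$ after combining with the interior Harnack inequality on $\Omega$ for the interior regularity of $w$. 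Passing from fixed $u,v$ to all admissible pairs and from the centered boundary point to all boundary points in $B_{1/2}$ uses the scale- and translation-invariance built into hypotheses \eqref{item:5-density}--\eqref{item:7-nondegeneracy}.

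The main obstacle, I expect, is step (iii) --- the two-sided bound $u\asymp\phi$ near $\partial\Omega$ --- and more precisely extracting a quantitative lower bound $u\gtrsim\phi$. The upper bound $u\lesssim\phi$ follows fairly directly from the Carleson estimate and the fact that $\phi$ itself is a positive harmonic (hence a valid comparison function via the maximum principle on $\Omega$), but the lower bound requires ruling out that $u$ decays faster than the distance; this is exactly what the nondegeneracy \eqref{item:7-nondegeneracy} of $\phi$ together with the ACF formula must deliver, and making the constants uniform in the base point and scale --- so that the iteration closes --- is the delicate point. Everything else is either classical (mean value property, interior Harnack, maximum principle) or a bookkeeping adaptation of \cite{DS}.
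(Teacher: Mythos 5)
Your high-level map of the proof --- reduce to a quantitative boundary Harnack inequality \`a la De Silva--Savin, pack the geometry into interior Harnack-type estimates, use the ACF monotonicity formula --- agrees with the paper, and your final reduction from the quantitative inequality to H\"older continuity of $u/v$ is the standard oscillation-decay argument the paper also uses in \cref{s:proof-of-T2}. But two of your three "key interior estimates" are not what the paper proves, and one of them looks like it cannot be proved the way you suggest.

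First, the ACF formula cannot be applied directly to $u$ and $v$ as you propose. Both $u$ and $v$ are positive on all of $\Omega\cap B_1$ and vanish on $B_1\setminus\Omega$, so they do \emph{not} have complementary supports; the ACF quantity $r^{-4}\int_{B_r}|\nabla u|^2|x|^{2-d}\cdot\int_{B_r}|\nabla v|^2|x|^{2-d}$ built from two harmonic functions with the same support has no reason to decay, and the Friedland--Hayman mechanism you invoke (eigenvalues of $\{u>0\}\cap\partial B_r$ strictly above the half-sphere value) gives nothing when $\{u>0\}=\{v>0\}$. In the paper the ACF formula appears only in \cref{l:harnack-chains-away}, applied to the restrictions of $(\phi-\tfrac34\delta R)_+$ to two \emph{hypothetically disconnected} components of $B_R\cap\{\phi>\tfrac{\delta}{2}R\}$; there the supports really are disjoint, the sphere-cap estimate comes from the fact that both restrictions vanish on a cap of definite angular size around $x_r\in\partial B_r\cap\partial\Omega$, and the conclusion is a contradiction establishing connectivity (i.e.\ existence of Harnack chains away from the boundary), not any decay of $u/v$. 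The oscillation decay of $u/v$ is produced later, purely from the De Silva--Savin dichotomy and the De Giorgi oscillation lemma applied to $C_\ast u - v$ and $C_\ast v - u$.

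Second, the two-sided bound $c\,\phi\le u\le C\,\phi$ is neither proved in the paper nor needed, and your proposed route to the upper bound $u\lesssim\phi$ is flawed: $\phi$ is only \emph{subharmonic} (assumption \ref{item:4-subharmonicity}), not harmonic, so it is not an admissible supersolution for a comparison principle in $\Omega$. What the paper actually proves (\cref{l:growth2}) is the much weaker growth estimate $w\le C\,\phi^{-p}$ on $B_{1/2}\cap\Omega$ for a normalized harmonic $w$, obtained by iterating the near-boundary interior Harnack inequality of \cref{l:iteration}. This is then combined with the level-set hypothesis \ref{item:6-levels} to get $\int_{B_{1/2}}w^\alpha\le C$, and finally a Krylov--Safonov-type argument (\cref{l:krylov-safonov}, reproved from the mean-value inequality plus the exterior density bound \ref{item:5-density}) upgrades this $L^\alpha$ bound to $w\le C$ in $B_{1/4}$, which is the Carleson-type estimate. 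Your sketch suggests this would follow "fairly directly" from the mean-value property and positive density of the complement, but a single application of the mean-value formula only gives a fixed multiplicative improvement; closing the loop requires precisely the Krylov--Safonov iteration plus the integrability supplied by \ref{item:6-levels}. Your proposal omits both the De Giorgi oscillation lemma and the Krylov--Safonov step, and substitutes a comparison-with-$\phi$ improvement-of-flatness that the paper does not use; without the corrections above, the argument would not close.
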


In \cref{s:proof-of-T2}, we will deduce \cref{t:main} from the following theorem.

\begin{theorem}[Boundary Harnack Inequality ]\label{t:main2}	
Suppose that $\Omega\subset B_1$, $0\in\partial\Omega$, and $\phi:B_1\to\R$ satisfy the conditions \ref{item:1-positivity}, \ref{item:2-lipschitz}, \ref{item:3-distance}, \ref{item:4-subharmonicity}, \ref{item:5-density}, \ref{item:6-levels} and \ref{item:7-nondegeneracy} of \cref{t:main}. Then, there are constants $M>0$, $\delta\in(0,\eta]$ and $0<\rho<R\le 1$, depending on the dimension $d$ and the constants from \ref{item:2-lipschitz}, \ref{item:3-distance}, \ref{item:5-density}, \ref{item:6-levels} and \ref{item:7-nondegeneracy}, such that the following Boundary Harnack Inequality  holds. Suppose that
$$u,v:B_1\to\R$$
are nonnegative continuous functions satisfying
\begin{equation}\label{e:main2:hypothesis}
\begin{cases}
\Delta u=\Delta v=0\quad\text{in}\quad \Omega\cap B_1;\\
u=v=0\quad\text{on}\quad B_1\setminus \Omega;\\
u(P)=v(P)\quad\text{for some point}\quad P\in B_R\cap\{\phi>\delta R\}.
\end{cases}
\end{equation}
Then
$$\frac1M v\le u\le Mv\quad\text{in}\quad B_{\rho}\ .$$
\end{theorem}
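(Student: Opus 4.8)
The plan is to follow the strategy of De Silva and Savin, reducing the Boundary Harnack inequality to a pair of quantitative statements: an \emph{expansion of positivity / oscillation-decay} mechanism for the ratio $u/v$, combined with the interior Harnack inequalities proven in \cref{s:harnack}. First I would fix the scales and constants. Normalizing so that $u(P)=v(P)=1$ at the reference point $P\in B_R\cap\{\phi>\delta R\}$, the interior Harnack inequalities from \cref{s:harnack} — which are exactly where hypotheses \ref{item:3-distance}, \ref{item:5-density}, \ref{item:6-levels}, \ref{item:7-nondegeneracy} enter — should give two-sided bounds $c\le u,v\le C$ on a fixed portion of $\{\phi>\delta r\}\cap B_r$ at every dyadic scale $r$, with constants independent of the scale, together with control of $u$ and $v$ near $\partial\Omega$ in terms of their values in the ``good'' region $\{\phi>\delta r\}$. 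Concretely one wants: (i) a \emph{Carleson-type estimate} $\sup_{B_{r/2}(x_0)\cap\Omega}u\le C\,u(A_r(x_0))$ for $x_0\in\partial\Omega$, where $A_r(x_0)$ is a corkscrew-type point in $\{\phi>\delta r\}$; and (ii) a \emph{nondegeneracy/lower bound} $u(x)\ge c\,\phi(x)\,r^{-1}\,u(A_r(x_0))$ of the same flavor. Both of these follow from the interior Harnack chain built using the geometric hypotheses on $\phi$ — the linear decay of level sets \ref{item:6-levels} guarantees that $\{\phi>\delta r\}$ occupies a definite fraction of $B_r$ and is connected by Harnack chains of bounded length.

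Next I would set up the iteration. Let $m(r)=\inf_{B_r\cap\Omega}\frac{u}{v}$ and $M(r)=\sup_{B_r\cap\Omega}\frac{u}{v}$ on dyadic balls $B_r=B_{\rho_0 2^{-k}}$. Using the Carleson estimate and the lower bound above, one shows $M(r)<\infty$ and $m(r)>0$ for $r$ below some scale $R$ — this is where the condition $u(P)=v(P)$ and the ACF monotonicity formula for subharmonic functions (quoted in the introduction as a main tool) pin down the correct normalization: the ACF formula applied to $u$ and $v$, or to suitable combinations, prevents either function from being too large or too small relative to its ``distance-function'' profile $\phi$. The core step is the oscillation-decay estimate: there is $\theta\in(0,1)$ with
$$M(r/2)-m(r/2)\le \theta\,\big(M(r)-m(r)\big).$$
To prove it, I would normalize so that $m(r)=0$ after subtracting (i.e. consider $w=u-m(r)v\ge 0$, harmonic in $\Omega$, vanishing on $B_1\setminus\Omega$), observe that $w$ and $v$ are both nonnegative solutions of the same flavor, and then use an alternative: either $w$ is ``large'' compared to $v$ at the corkscrew point $A_{r/2}$, in which case the interior Harnack inequality propagates a definite lower bound for $w/v$ throughout $B_{r/2}\cap\Omega$ (pushing $m$ up); or $w$ is ``small'' at $A_{r/2}$, in which case the Carleson estimate forces $w\le (1-c)(M(r)-m(r))v$ throughout $B_{r/2}\cap\Omega$ (pushing $M$ down). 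Either way the oscillation contracts by a fixed factor. Iterating gives $M(\rho_0 2^{-k})-m(\rho_0 2^{-k})\le C\theta^k$, hence $u/v$ extends continuously to $\overline\Omega\cap B_\rho$ with a H\"older modulus $\alpha=\log_2(1/\theta)$; the two-sided bound $\frac1M v\le u\le Mv$ in $B_\rho$ then comes from $m(\rho)>0$, $M(\rho)<\infty$, finishing the proof.

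The main obstacle I expect is establishing the Carleson-type estimate (i) and the boundary lower bound (ii) in this setting, since we do not have an NTA structure a priori — the usual proofs of the Carleson estimate lean on interior and exterior corkscrew points plus Harnack chains, and here these must be extracted purely from the hypotheses on $\phi$. The exterior corkscrew condition is essentially hypothesis \ref{item:5-density} (positive density of the complement), and the interior ``fat'' region is $\{\phi>\delta r\}$, whose measure lower bound comes from combining \ref{item:6-levels} and \ref{item:7-nondegeneracy}; the delicate point is the \emph{connectedness} of this region by short Harnack chains with scale-invariant constants, which is where the subharmonicity \ref{item:4-subharmonicity} and the Lipschitz bound \ref{item:2-lipschitz} must be used together with the interior Harnack inequalities from \cref{s:harnack} — effectively these replace the Harnack chain condition in the NTA definition. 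Once these two boundary estimates are in hand with uniform constants, the De Silva--Savin iteration is robust and the rest is bookkeeping with dyadic scales.
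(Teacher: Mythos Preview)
Your proposal conflates two distinct steps that the paper keeps separate, and in doing so it becomes circular. The oscillation-decay argument you outline --- replace $u$ by $w=u-m(r)v$ and run an alternative at the corkscrew point --- is exactly the content of \cref{l:mainfin} in \cref{s:proof-of-T2}, where it is used to \emph{deduce} the H\"older continuity of $u/v$ \emph{from} the Boundary Harnack Inequality, not to prove it. Your step ``interior Harnack propagates a lower bound for $w/v$ throughout $B_{r/2}\cap\Omega$'' already requires comparing two nonnegative harmonic functions up to $\partial\Omega$, which is precisely the statement of \cref{t:main2}. The same circularity infects your ingredient (ii): a pointwise bound $u(x)\ge c\,\phi(x)\,r^{-1}u(A_r)$ is essentially \cref{t:main2} applied to the pair $(u,\phi)$ (modulo $\phi$ being only subharmonic). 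What the results of \cref{s:harnack} actually give, by iterating \cref{l:iteration}, is only $u\ge c\,\phi^{p'}$ and $v\le C\,\phi^{-p}$ (cf.\ \cref{l:growth2}) with exponents $p,p'>0$ that do not match, so the ratio $u/v$ is not controlled near $\partial\Omega$ by this route. Also, ACF is not used to ``pin down the normalization'' of $u,v$; in the paper it appears only in \cref{l:harnack-chains-away} to establish connectedness of $\{\phi>\delta R\}\cap B_R$.

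The paper's proof of \cref{t:main2} avoids this circularity by a different iteration, the De Silva--Savin ``almost-positivity'' mechanism (\cref{l:step1}), which does \emph{not} track $u/v$. One first uses the Carleson-type bound \cref{l:step2} --- this is where hypothesis \ref{item:6-levels} actually enters, to turn the pointwise estimate $w\le C\phi^{-p}$ of \cref{l:growth2} into an $L^\alpha$ bound and then invoke the Krylov--Safonov-type \cref{l:krylov-safonov} --- together with the interior Harnack \cref{l:interior-harnack} to arrange that $w:=C_\ast u-v$ satisfies $w\ge M$ on $B_R\cap\{\phi>\delta R\}$ and $w\ge -1$ on $B_R\cap\{0<\phi\le\delta R\}$ (\cref{l:step3}). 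Then \cref{l:step1} iterates \emph{this configuration}: at each dyadic step the level $\delta$ is halved, the negative part of $w$ is contracted by a fixed factor $a<1$ via the De Giorgi oscillation lemma (using the exterior density \ref{item:5-density}), while the lower bound on the good set $\{\phi>\delta/2^n\}$ is preserved up to the same factor via \cref{l:iteration}. A covering argument over all boundary points yields $w\ge 0$ on $B_\rho$, i.e.\ $v\le C_\ast u$; the other inequality is symmetric. No pointwise lower bound $u\ge c\,\phi$ is ever needed, and that is the point you are missing.
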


In general, The Boundary Harnack Principle (\cref{d:boundary-harnack}) on a domain $\Omega$ is a consequence of the validity of the Boundary Harnack Inequality  at any scale and for any couple of nonnegative  functions $u,v$ satisfying \eqref{e:main2:hypothesis} on a rescaling of $\Omega$. This implication is well-known (see for instance \cite{JK}) and in \cref{s:proof-of-T2} we give a short proof of this fact in our context. In order to do so, we need that the Boundary Harnack Inequality  holds at any scale. This follows from the fact that the assumptions of \cref{t:main} and \cref{t:main2} are scale-invariant: \smallskip

\begin{remark}[Scale invariance]
Let $\Omega$ and $\phi$ be as in \cref{t:main}. Then, for every $x_0\in\partial\Omega\cap B_1$ and every $r\in(0,1-|x_0|)$, the rescalings $\Omega_{r,x_0}\subset B_1$ and $\phi_{r,x_0}:B_1\to\R$ defined as
$$\Omega_{r,x_0}:=\frac1r(-x_0+\Omega)\qquad\text{and}\qquad\phi_{r,x_0}(x):=\frac{\phi(x_0+rx)}{r}\,,$$
satisfy the properties \ref{item:1-positivity}, \ref{item:2-lipschitz}, \ref{item:3-distance}, \ref{item:4-subharmonicity}, \ref{item:5-density} and \ref{item:6-levels} of \cref{t:main} with the same constants. 					
\end{remark}

\begin{remark}[On the assumption \ref{item:7-nondegeneracy}]
We also notice that the assumption \ref{item:7-nondegeneracy} is only needed to assure that, for $\delta$ small enough, the set $B_R\cap \{\phi>\delta R\}$ from \eqref{e:main2:hypothesis} is non-empty. Indeed, if $\Omega$ is an open set in $B_1$ and $\phi:B_1\to\R$ is a function satisfying \ref{item:1-positivity} and \ref{item:7-nondegeneracy} of \cref{t:main}, then
	$$B_r(x_0)\cap\{\phi>r\delta\}\neq\emptyset\,\quad \text{for every}\quad x_0\in\partial\Omega\cap B_1,\quad r\in(0,1-|x_0|)\quad\text{and}\quad \delta\in(0,\eta).$$
\end{remark}

\begin{remark}
Several versions of the Boundary Harnack Inequality  (B.H.I.) appeared recently in the literature. See for instance \cite{LinLin}, where the authors established a B.H.I. on the class of nodal domains of solutions to uniformly elliptic equations in divergence form; \cite{AS} where B.H.I. was proved for solutions with right-hand side on sufficiently flat Lipschitz domains; we also refer to \cite{DS2} for a higher order Boundary Harnack Principle.
\end{remark}

\section{Harnack chains and interior Harnack inequalities}\label{s:harnack}
In this Section we prove the existence of Harnack chains, and consequently the validity of Harnack-type inequalities, by differentiating between those points that are close to the boundary and those that are away.
\subsection{Harnack chains and Harnack inequality close to the boundary}
In this subsection we show how to construct short Harnack chains starting from a point close to the boundary of a domain $\Omega$ satisfying the hypotheses of \cref{t:main}. This is done in the following simple lemma, which is essential for the proof of \cref{t:main2} (see \cref{s:proof-of-T2}, \cref{l:step1}).
\begin{lemma}[Short Harnack chains close to the boundary]\label{l:main}	
Suppose that $\Omega\subset B_1$, $0\in\partial\Omega$, and $\phi:B_1\to\R$ satisfy the conditions \ref{item:1-positivity}, \ref{item:2-lipschitz}, \ref{item:3-distance} and \ref{item:4-subharmonicity} of \cref{t:main}. Let
$$x_0\in \{\phi>0\}\cap B_{1}\qquad\text{be such that}\qquad 3\,\text{\rm dist}(x_0,\partial\Omega)<1-|x_0|,$$
let $r$ be the distance from $x_0$ to $\partial\Omega$ and $z_0$ be a projection of $x_0$ on $\partial\Omega$ (thus, $z_0\in\partial\Omega\cap B_1$ and $|x_0-z_0|=r$).

 Then, there is $y_0\in \partial B_r(x_0)$ such that the following holds:
\begin{enumerate}[\rm(i)]
\item $\phi(y_0)\ge (1+\sigma)\phi(x_0)$, where the constant $\sigma>0$, depends only on the dimension $d$, and the constants $L$ from \ref{item:2-lipschitz} and $\kappa$ from \ref{item:3-distance};
\item there is a constant $\Lambda >1$, depending only on the dimension $d$, the constants $L$ from \ref{item:2-lipschitz} and $\kappa$ from \ref{item:3-distance},  such that for every positive harmonic function $w:\Omega\to\R$,
$$\frac1{\Lambda} w(y_0)\le w(x_0)\le {\Lambda}w(y_0).$$
\end{enumerate}	
\end{lemma}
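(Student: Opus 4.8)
The plan is to build the point $y_0$ by moving from $x_0$ in the direction \emph{away} from the boundary, i.e. along $\nu:=(x_0-z_0)/r$, and then to combine a distance-type lower bound for $\phi$ with the interior Harnack inequality on a suitable chain of balls. For part (i), set $y_0:=z_0+2r\nu$, so that $y_0\in\partial B_r(x_0)$ and $\mathrm{dist}(y_0,\partial\Omega)$ is comparable to $r$: indeed $|y_0-z_0|=2r$ gives $\mathrm{dist}(y_0,\partial\Omega)\le 2r$, while the triangle inequality against any boundary point together with $\mathrm{dist}(x_0,\partial\Omega)=r$ gives a lower bound $\mathrm{dist}(y_0,\partial\Omega)\ge r$ (a boundary point within distance $<r$ of $y_0$ would be within distance $<2r$ contradicting nothing directly, so one should argue more carefully — see below). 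Granting $\mathrm{dist}(y_0,\partial\Omega)\ge cr$ for a dimensional $c$, assumption \ref{item:3-distance} yields $\phi(y_0)\ge\kappa\,\mathrm{dist}_{B_1\setminus\Omega}(y_0)\ge \kappa c\, r$, while the Lipschitz bound \ref{item:2-lipschitz} and $\phi(z_0)=0$ give $\phi(x_0)\le L r$. Hence $\phi(y_0)\ge(\kappa c/L)\,\phi(x_0)$; if $\kappa c/L>1$ we are done with $\sigma=\kappa c/L-1$, and otherwise we must use subharmonicity \ref{item:4-subharmonicity} to upgrade: by the sub-mean-value property on $B_r(x_0)\subset\Omega$, $\phi(x_0)\le \fint_{\partial B_r(x_0)}\phi$, and since $\phi\ge 0$ everywhere and $\phi(y_0)\gtrsim r$ on a boundary cap, one still only gets $\phi(x_0)\le\sup_{\partial B_r(x_0)}\phi$, which is the wrong direction — so the genuine mechanism must be that the \emph{oscillation} of $\phi$ forces a value strictly above $\phi(x_0)$ somewhere on $\partial B_r(x_0)$. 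Concretely: $\phi$ harmonic would be the borderline case where $\phi(x_0)$ equals the average; subharmonicity makes $\phi(x_0)\le$ average, and since $\phi$ is not constant (it vanishes near $z_0$-side and is $\ge \kappa c r$ on the $\nu$-side), some point of $\partial B_r(x_0)$ has $\phi\ge\phi(x_0)+\sigma'\phi(x_0)$ with $\sigma'$ controlled by how large the "high" cap is versus how large $\phi(x_0)$ can be, i.e. by $\kappa/L$ and $d$.

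For part (ii), I would connect $x_0$ to $y_0$ by a short Harnack chain of overlapping balls all compactly contained in $\Omega$. Both $x_0$ and $y_0$ lie at distance $\gtrsim r$ from $\partial\Omega$ (for $x_0$ exactly $r$, for $y_0$ at least $cr$), and $|x_0-y_0|=r$, so one can find $N=N(d)$ balls of radius $\sim cr/4$ centered along the segment $[x_0,y_0]$, each contained in $\Omega$ (using that the segment stays at distance $\gtrsim cr$ from $\partial\Omega$, which follows from convexity of the complement-distance along a segment joining two points far from the boundary — more precisely one should check $\mathrm{dist}([x_0,y_0],\partial\Omega)\ge c'r$ directly from the geometry, since $z_0$ is the common near-boundary obstruction). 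The classical interior Harnack inequality on each ball for the positive harmonic function $w$ then gives $w(x_0)\le C_d^{\,N} w(y_0)$ and symmetrically, so $\mathcal H:=C_d^{\,N}$ works; it depends only on $d$ (and on $c$, which depends on $d,L,\kappa$).

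The main obstacle I anticipate is the construction of $y_0$ in part (i) with a \emph{definite} gain $(1+\sigma)$ rather than a mere comparability $\phi(y_0)\gtrsim\phi(x_0)$: if the ratio $\kappa c/L$ happens to be $\le 1$ one cannot conclude directly from \ref{item:3-distance} and \ref{item:2-lipschitz}, and the sub-mean-value inequality only gives $\phi(x_0)\le\fint_{\partial B_r(x_0)}\phi$, which bounds the \emph{average}, not the max, from above. The resolution should be: pick $y_0$ to be a point of $\partial B_r(x_0)$ realizing $\sup_{\partial B_r(x_0)}\phi$; since $\fint_{\partial B_r(x_0)}\phi\ge\phi(x_0)$ and the portion of $\partial B_r(x_0)$ near $z_0$ carries small values of $\phi$ (again by Lipschitzness, $\phi\le L|x-z_0|\le 2Lr$ there, but more importantly $\phi$ is small on a cap where $|x-z_0|$ is small, forcing the remaining part of the sphere to compensate), one extracts a strict improvement $\sup_{\partial B_r(x_0)}\phi\ge(1+\sigma)\phi(x_0)$. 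Making this quantitative — identifying the cap, estimating its size, and turning "average $\ge\phi(x_0)$ with a low cap" into "max $\ge(1+\sigma)\phi(x_0)$" — is the technical heart, and $\sigma$ will come out depending only on $d$, $L$, and $\kappa$ as claimed. Everything else (the Harnack chain and the distance bounds for $y_0$) is routine elliptic-theory bookkeeping.
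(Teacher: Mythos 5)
Your final resolution for part (i) --- take $y_0$ realizing $\sup_{\partial B_r(x_0)}\phi$ and combine the sub-mean-value inequality from \ref{item:4-subharmonicity} with the observation that $\phi$ is small on a spherical cap near $z_0$ --- is exactly the paper's proof, and the quantitative step you defer is carried out there as follows: fix $\eps$ and choose the cap $\partial B_r(x_0)\cap B_\rho(z_0)$ to carry a fraction $\eps^{d-1}$ of the surface measure, so that $\rho\lesssim \eps r$ and hence $\phi\le 2L\eps r$ on the cap by Lipschitzness; then, and this is the ingredient you do not quite spell out, use \ref{item:3-distance} in the form $r\le\phi(x_0)/\kappa$ to turn the cap bound into $\phi\le\frac{2L\eps}{\kappa}\phi(x_0)$ there. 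Plugging the cap/non-cap split into $\phi(x_0)\le\mean{\partial B_r(x_0)}\phi$ and solving for $M:=\sup_{\partial B_r(x_0)}\phi$ gives $M\ge(1+\sigma)\phi(x_0)$ once $\eps$ is small enough that $2L\eps/\kappa\le 2^{1-d}$. Two smaller remarks. Your opening attempt $y_0=z_0+2r\nu$ is indeed a dead end, as you suspected: $|y_0-z_0|=2r$ does \emph{not} bound $\mathrm{dist}(y_0,\partial\Omega)$ from below, since the boundary can wrap around $y_0$ on the side opposite $z_0$; in particular the estimate ``$\mathrm{dist}(y_0,\partial\Omega)\ge cr$ for a dimensional $c$'' that your part (ii) chain relies on cannot be had for that $y_0$. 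For the argmax $y_0$ it is immediate from part (i) and Lipschitzness: $\phi(y_0)\ge\phi(x_0)\ge\kappa r$, hence $\mathrm{dist}_{B_1\setminus\Omega}(y_0)\ge\phi(y_0)/L\ge\kappa r/L$. With that in hand your Harnack chain for part (ii) is the same as the paper's; note only that the number of balls, and hence $\mathcal H$, depends on $L/\kappa$ as well as $d$, which is consistent with the statement.
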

\begin{proof}
We fix a parameter $\eps>0$ that we will choose later on. First, we notice that by the condition \ref{item:4-subharmonicity}, we have that
\begin{align*}
\mean{\partial B_r(x_0)}\phi-\phi(x_0)&=\frac{1}{d\omega_d}\int_0^r s^{1-d}\Delta \phi(B_s(x_0))\,ds\ge 0.
\end{align*}
Let now the radius $\rho>0$ be such that
$$\mathcal H^{d-1}\big(\partial B_r(x_0)\cap B_\rho(z_0)\big)=\eps^{d-1}\mathcal H^{d-1}\big(\partial B_r(x_0)\big).$$
Now, for $\eps$ small enough $\rho$ is comparable to $\eps r$. In particular, by choosing $\eps$ small enough (depending only on the dimension), we have $\rho\le 2\eps r$, so the Lipschitz continuity of $\phi$ gives that
$$\phi(x)\le 2L\eps r\quad\text{for}\quad x\in \partial B_r(x_0)\cap B_\rho(z_0),$$
where $L$ is the Lipschitz constant from \ref{item:2-lipschitz}. Thus, setting
$$M:=\max\big\{\phi(x)\ :\ x\in \partial B_r(x_0)\big\},$$
we get that
\begin{align*}\phi(x_0)\le \mean{\partial B_r(x_0)}\phi &\le \frac{1}{r^{d-1}}\Big((2\eps r)^{d-1}2L\eps r+M\big(r^{d-1}-(2\eps r)^{d-1}\big)\Big)\\
&\le \frac{1}{r^{d-1}}\Big((2\eps r)^{d-1}\frac{2L\eps}{\kappa}\phi(x_0) +M\big(r^{d-1}-(2\eps r)^{d-1}\big)\Big)\\
&\le(2\eps)^{d-1}\frac{2L\eps}{\kappa}\phi(x_0) +M\big(1-(2\eps) ^{d-1}\big),
\end{align*}
which implies that
$$\Big(1-\eps ^{d-1}\frac{2L\eps}{\kappa}\Big)\,\phi(x_0)\le \big(1-\eps^{d-1}\big)M.$$
We now choose $\eps$ such that
$$\frac{2L\eps}\kappa\le \frac{1}{2^{d-1}}.$$
Thus, there is a point $y_0\in\partial B_r(x_0)$ such that
$$(1+\sigma)\phi(x_0)\le \phi(y_0),$$
where
$$1+\sigma:=\frac{1}{1-(2\eps)^{d-1}}\Big(1-\eps^{d-1}\Big).$$
In order to prove (ii), we notice that by the Lipschitz continuity of $\phi$, we have
$$\text{dist}_{B_1\setminus\Omega}(y_0)\ge \frac1{L}\phi(y_0)\ge \frac1{L}\phi(x_0)\ge \frac{\kappa}{L}r.$$
Thus,
$$B_r(x_0)\cap B_{\sfrac{r\kappa}{L}}(y_0)\subset \Omega,$$
and the claim (ii) follows by the classical Harnack inequality.
\end{proof}	

As a consequence, by iterating this result, we obtain the following Harnack-type inequality close the boundary.
\begin{lemma}[Interior Harnack inequality close to the boundary]\label{l:iteration}	
As in \cref{l:main}, we suppose that $\Omega\subset B_1$, $0\in\partial\Omega$, and $\phi:B_1\to\R$ satisfy the conditions \ref{item:1-positivity}, \ref{item:2-lipschitz}, \ref{item:3-distance} and \ref{item:4-subharmonicity} of \cref{t:main}. Then, there are constants $A>0$ and $\delta_0>0$, depending only on $d$ and the constants $L$ from \ref{item:2-lipschitz} and $\kappa$ from \ref{item:3-distance}, such that for every positive harmonic function $w:\Omega\to\R$, we have
$$\sup_{B_{\sfrac12}\cap \{\phi>\frac{\delta}{2}\}} w\le A\,\sup_{B_{1}\cap \{\phi>\delta\}} w\qquad\text{and}\qquad \inf_{B_{\sfrac12}\cap \{\phi>\frac{\delta}{2}\}} w\ge \frac1A\,\inf_{B_{1}\cap \{\phi>\delta\}} w.$$
for every $\delta\in(0,\delta_0]$.
\end{lemma}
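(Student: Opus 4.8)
The plan is to obtain \cref{l:iteration} by iterating \cref{l:main}: starting from an arbitrary point $x_0\in B_{\sfrac12}\cap\{\phi>\sfrac{\delta}{2}\}$, I would build a Harnack chain, with a number of links bounded only in terms of $d$, $L$ and $\kappa$, that terminates at a point of $B_1\cap\{\phi>\delta\}$. Both inequalities then follow at once by running a positive harmonic function $w$ along the chain and using part (ii) of \cref{l:main}.

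If $\phi(x_0)>\delta$ there is nothing to prove, so suppose $\sfrac{\delta}{2}<\phi(x_0)\le\delta$. By \ref{item:3-distance} the distance $r_0:=\dist(x_0,\partial\Omega)$ satisfies $\kappa r_0\le\phi(x_0)\le\delta$, so---$\delta_0$ being small and $|x_0|<\sfrac12$---the hypothesis $3r_0<1-|x_0|$ of \cref{l:main} holds; applying it produces $x_1\in\partial B_{r_0}(x_0)$ with $\phi(x_1)\ge(1+\sigma)\phi(x_0)$ and $\mathcal H^{-1}w(x_1)\le w(x_0)\le\mathcal H\,w(x_1)$ for every positive harmonic $w$. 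I would repeat this while $\phi(x_j)\le\delta$. Since $\phi(x_j)\ge(1+\sigma)^j\phi(x_0)>(1+\sigma)^j\sfrac{\delta}{2}$, the threshold $\delta$ is exceeded after at most $k_0:=\lceil\log 2/\log(1+\sigma)\rceil$ steps, and $k_0$ depends only on $d$, $L$, $\kappa$ (through $\sigma$).

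The step that requires care---essentially the only one, since the geometric content already sits in \cref{l:main}---is to verify, uniformly in $\delta$, that every application of \cref{l:main} along the chain is legitimate and that the chain does not run out towards $\partial B_1$. For this I would propagate the Lipschitz bound along the chain: $\phi(x_{j+1})\le\phi(x_j)+Lr_j\le(1+L/\kappa)\phi(x_j)$, so $\phi(x_j)\le C\delta$ with $C:=(1+L/\kappa)^{k_0}$ for all $j\le k_0$; hence $r_j\le C\delta/\kappa$, the total displacement satisfies $|x_j-x_0|\le k_0 C\delta/\kappa$, and a small enough choice of $\delta_0=\delta_0(d,L,\kappa)$ keeps the whole chain in a fixed ball $B_{r_*}$ with $r_*<1$ and keeps $3r_j<1-|x_j|$ throughout. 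Thus \cref{l:main} may indeed be applied at each $x_j$, $j\le k_0$, and the chain terminates at some $x_k\in B_1\cap\{\phi>\delta\}$ with $k\le k_0$.

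Finally I would concatenate the two-sided estimate of part (ii) of \cref{l:main} along $x_0,x_1,\dots,x_k$ to obtain $\mathcal H^{-k_0}w(x_k)\le w(x_0)\le\mathcal H^{k_0}w(x_k)$; since $x_k\in B_1\cap\{\phi>\delta\}$, this gives $\mathcal H^{-k_0}\inf_{B_1\cap\{\phi>\delta\}}w\le w(x_0)\le\mathcal H^{k_0}\sup_{B_1\cap\{\phi>\delta\}}w$ for every $x_0\in B_{\sfrac12}\cap\{\phi>\sfrac{\delta}{2}\}$. Taking the supremum, respectively the infimum, over all such $x_0$ yields the two asserted inequalities with $A:=\mathcal H^{k_0}$ and $\delta_0$ as above.
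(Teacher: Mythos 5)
Your proof is correct and follows the same strategy as the paper's: iterate \cref{l:main} from $x_0\in B_{1/2}\cap\{\delta/2<\phi\le\delta\}$, bound the number of steps by the geometric growth $\phi(x_j)\ge(1+\sigma)^j\phi(x_0)$, control the total displacement to keep the hypothesis of \cref{l:main} valid, and conclude via the two-sided Harnack estimate from part (ii). The only superficial difference is that you bound the step lengths $r_j$ via a Lipschitz propagation of $\phi$ along the chain, whereas the paper uses the simpler observation that $\phi(x_j)\le\delta$ (by the stopping rule) already gives $r_j\le\delta/\kappa$ directly; both yield the same conclusion with $A=\mathcal H^{k_0}$.
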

\begin{proof}
Let $x_0\in B_{\sfrac12}\cap \{\phi>\frac\delta2\}$. If $\phi(x_0)>\delta$, then for any $A\ge 1$, we clearly have the inequalities
$$\frac{1}{A}\inf_{B_1\cap \{\phi>\delta\}} w\le w(x_0)\quad\mbox{and}\quad w(x_0)\le A\sup_{B_{1}\cap \{\phi>\delta\}} w,$$
Thus, we consider the case $x_0\in B_{\sfrac12}\cap \{\frac\delta2<\phi\le \delta\}$. Let $x_1$ be the point $y_0$ from \cref{l:main}. Then,
$$\phi(x_1)\ge (1+\sigma)\phi(x_0)\ge (1+\sigma)\frac{\delta}{2}.$$
Now, by construction $x_1\in B_r(x_0)$ and $r=\mathrm{dist}_{B_1\setminus \Omega}(x_0)$, so we get that $|x_1-x_0|=\mathrm{dist}_{B_1\setminus \Omega}(x_0)$.\\
Using this and \ref{item:3-distance}, we obtain that
$$|x_0-x_1|=\text{dist}_{B_1\setminus\Omega}(x_0)\le \frac1{\kappa}\phi(x_0)\le \frac{\delta}{\kappa}.$$
If $x_1\in \{\phi\le \delta\}$, we repeat the same procedure to obtain a point $x_2$. Iterating this argument, we obtain a sequence of points $x_n$ such that
$$x_n\in B_{r_n}\cap\Big\{\frac{\delta}{2}(1+\sigma)^n<\phi\le \delta\Big\}\qquad\text{with}\qquad r_n:=\frac12+n\frac{\delta}{\kappa}\,$$
and
$$\frac1{\Lambda^n}w(x_n)\le w(x_0)\le \Lambda^n w(x_n)$$
where $\Lambda>1$ is the Harnack constant from part (ii) of \cref{l:main}. Now, define $N$ to be the largest index for which $x_N\in B_{1}\cap \{\phi\le\delta\}$ and to which we can apply \cref{l:main} to obtain $x_{N+1}$. Thus, necessarily
$$\frac12(1+\sigma)^N\le 1,$$
which means that
$$N\le \frac1{\log_2(1+\sigma)}.$$
Thus, we have also that
$$r_{N+1}\le \frac12+(N+1)\frac{\delta}{\kappa}\le \frac12+\Big(\frac1{\log_2(1+\sigma)}+1\Big)\frac{\delta_0}{\kappa},$$
so by choosing $\delta_0$ small enough, we can suppose that $r_{N+1}\le 3/4$ and that we can still apply \cref{l:main} to $x_{N+1}$ to obtain $x_{N+2}$. Thus, the procedure stops because $$x_{N+1}\in\{\phi>\delta\}.$$
Hence
$$
\frac1{\Lambda^{N+1}} \min_{B_{1}\cap \{\phi>\delta\}} w\le \frac1{\Lambda^{N+1}}w(x_{N+1})\le w(x_0)
$$
and $$
w(x_0)\le \Lambda^{N+1}w(x_{N+1})\le \Lambda^{N+1} \max_{B_{1}\cap \{\phi>\delta\}} w.$$
The claim follows by taking $A:=\Lambda^{N+1}$ and $x_0$ as the point at which the maximum (resp. the minimum) $w$ is achieved in $B_1\cap \{\phi\ge \delta/2\}$.
\end{proof}	

\subsection{Harnack chains and Harnack inequality away from the boundary} The main result of this subsection is the following interior Harnack inequality away from the boundary, which we will use in Step 2 (\cref{sub:step2}) of the proof of \cref{t:main2}.
\begin{proposition}[Interior Harnack inequality away from the boundary]\label{l:interior-harnack}
	Suppose that $\Omega\subset B_1$, $0\in\partial\Omega$, and $\phi:B_1\to\R$ satisfy the conditions \ref{item:1-positivity}, \ref{item:2-lipschitz}, \ref{item:4-subharmonicity} and \ref{item:5-density} of \cref{t:main}. Then, for every $\delta>0$ there is $R_0$ for which the following holds.
	
	 For every $R\in(0,R_0]$, there is a constant $c_{\mathcal H}=c_{\mathcal H}(\delta, R)>0$ such that for every positive harmonic function
	$$w:\Omega\cap B_1\to\R\,,\qquad w\ge 0\quad\text{in}\quad \Omega\cap B_1\,,\qquad  \Delta w= 0\quad\text{in}\quad \Omega\cap B_1$$
	we have
	$$\inf_{\{\phi>\delta R\}\cap B_R} w\ge c_{\mathcal H}\,\sup_{\{\phi>\delta R\}\cap B_R} w.$$
\end{proposition}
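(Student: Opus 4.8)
The plan is to deduce the statement from the classical interior Harnack inequality on balls applied along a chain; the reduction is soft, and the only delicate point is that the compact set on which we compare $w$ lies in one connected component of $\Omega\cap B_1$.

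\emph{Step 1 (geometry of the super-level set).} By \ref{item:2-lipschitz} and \ref{item:1-positivity}, if $\phi(x)>\delta R$ then for every $z\in B_1\setminus\Omega$ one has $\delta R<\phi(x)=\phi(x)-\phi(z)\le L|x-z|$, so $\dist(x,B_1\setminus\Omega)\ge\delta R/L$. Hence, choosing first $R_0\le\frac12(1+\delta/L)^{-1}$, for every $R\le R_0$ and every $x\in\{\phi>\delta R\}\cap B_R$ the ball $B_{\delta R/L}(x)$ is contained in $\Omega\cap B_1$, and $K:=\overline{\{\phi>\delta R\}\cap B_R}$ is a compact subset of $\{\phi\ge\delta R\}\subset\Omega$ with $K\subset\overline{B_R}$ and $\dist(K,\Omega^c)\ge\delta R/L$. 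If $K=\emptyset$ there is nothing to prove.

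\emph{Step 2 (the crux).} We claim that, after possibly shrinking $R_0$ (now also depending on the fixed pair $(\Omega,\phi)$), $K$ meets only one connected component of $\Omega\cap B_1$; this is where \ref{item:4-subharmonicity} and \ref{item:5-density} enter, and it is the main obstacle. First, \ref{item:4-subharmonicity} rules out bounded ``bubbles'': if a connected component $U$ of some super-level set $\{\phi>t\}$ ($t>0$) satisfied $\overline U\subset B_1$, then $(\phi-t)^+\ge0$ would be subharmonic on $B_1$ and vanish on $\partial U\subset B_1$, hence $\equiv0$ on $U$ by the maximum principle, a contradiction; so the closure of every component of every super-level set meets $\partial B_1$. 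Second, if $K$ met two distinct components $D_1\ne D_2$, then by Step 1 each $D_i$ would contain a ball $B_{\delta R/L}(x_i)$ with $x_i\in\overline{B_R}$, while $D_1\cap D_2=\emptyset$ forces the segment $[x_1,x_2]\subset B_{2R}$ to cross $B_1\setminus\Omega$ at some point $x_0$, i.e.\ $\Omega$ ``pinches'' near $0$ at scale $R$. Using the density hypothesis \ref{item:5-density} one shows that such a pinch, with $B_1\setminus\Omega$ fat on both of its sides, forces $\phi$ to vanish faster than linearly at $x_0$ (a barrier/comparison estimate, or equivalently a blow-up around $0\in\partial\Omega$ exploiting the scale invariance of the hypotheses): something like $\phi\le C\,|x-x_0|^{1+\beta}$ with $\beta=\beta(d,\mu)>0$ in a fixed neighbourhood of $x_0$; but then $\delta R<\phi(x_i)\le C(3R)^{1+\beta}$ is impossible once $R\le R_0$ is small enough. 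Hence $K$ lies in a single component $D$ of $\Omega\cap B_1$.

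\emph{Step 3 (chaining and conclusion).} Cover the compact set $K\subset D$ by finitely many balls $B_{r_*}(z_1),\dots,B_{r_*}(z_N)$, $z_j\in K$, with $3r_*:=\dist(K,\partial D)\ (\ge\delta R/L)$, so that each $B_{2r_*}(z_j)\subset D\subset\Omega\cap B_1$. Since $D$ is open and connected, any two of these balls are joined inside $D$ by a chain of at most $N'=N'(\delta,R,D)$ overlapping balls; applying the classical interior Harnack inequality on each ball of the chain to the positive harmonic function $w$ and composing yields $w(x)\le C_d^{\,N'}w(y)$, and symmetrically $w(y)\le C_d^{\,N'}w(x)$, for all $x,y\in K$. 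This proves the claim with $c_{\mathcal H}:=C_d^{-N'}>0$, a constant depending only on $\delta$ and $R$ (and on $d,L,\mu$ and the fixed $\Omega,\phi$). As indicated, the main obstacle is Step 2: controlling how $\{\phi>\delta R\}\cap B_R$ interacts with the component structure of $\Omega$ near the origin is where all of the geometric hypotheses are really used.
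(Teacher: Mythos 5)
Your Step~1 is fine and matches the easy part of the paper's argument. The problem is Step~2: the entire content of the proposition is hidden in the sentence ``using the density hypothesis \ref{item:5-density} one shows that such a pinch \dots forces $\phi$ to vanish faster than linearly at $x_0$'', and you neither prove that claim nor give a route that would plausibly prove it. A barrier or comparison argument does not obviously yield $\phi \le C|x-x_0|^{1+\beta}$ from the stated hypotheses: \ref{item:5-density} only gives exterior density at boundary points (a lower bound on $|B_r(x_0)\setminus\Omega|$), not the ``fat on both sides'' separating structure you invoke, and the pure density\,+\,subharmonicity oscillation decay (the De Giorgi lemma, \cref{oscillation}) only yields $\phi\lesssim |x-x_0|^\gamma$ with a small $\gamma>0$ that need not exceed $1$; for a half-space $\phi$ decays exactly linearly, so the extra input must come specifically from the two disjoint components. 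This is exactly where the paper brings in the Alt--Caffarelli--Friedman monotonicity formula: in \cref{l:harnack-chains-away} it truncates $\phi$ on the two hypothetical components to get two Lipschitz, disjointly-supported subharmonic functions $\psi_1,\psi_2$ vanishing simultaneously on a set of definite density on each sphere, applies the ACF dichotomy $\frac{d}{dr}\log\Phi(r)\ge \frac{1}{r}G(\alpha(r))$ to get quantitative decay of $\Phi(r)$, and combines this with a lower bound on each $\psi_i$ (from Lipschitz continuity) and a Poincar\'e inequality (this is where \ref{item:5-density} actually enters) to reach a contradiction for $\tau$ small enough, with constants uniform in $\Omega,\phi$. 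Nothing in your proposal replaces that quantitative input.

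A secondary issue is Step~3: once $K$ sits in a single component $D$, you bound the length of the Harnack chain by an $N'$ ``depending on $D$''. That makes $c_{\mathcal H}$ depend on the particular pair $(\Omega,\phi)$ rather than only on $d$ and the structural constants, which defeats the purpose when the proposition is fed into \cref{l:step3} and the iterative scheme of \cref{t:main2}. The paper avoids this by producing a connecting curve that stays inside $B_R\cap\{\phi>\tfrac{\delta}{2}R\}$, so that every ball in the chain has radius comparable to $\delta R/L$ and lies in $B_R$; a simple volume count then gives a uniform $N$.

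In short: the approach (reduce to a chaining argument after showing the super-level set does not straddle two components) is the same as the paper's, but the key step --- ruling out the two-component configuration --- is left as an unsubstantiated claim, and the tool the paper uses there (ACF monotonicity) is genuinely needed and cannot be replaced by the soft maximum-principle or barrier considerations you sketch.
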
	
In order to prove \cref{l:interior-harnack} it is sufficient to show that there are constants $N>0$ and $r>0$ (depending also on $\delta$ and $R$) such that, for every pair of points $x_0,y_0\in \{\phi>\delta R\}\cap B_R$, there exists a curve $\gamma:[0,1]\to B_1$ such that
$$\gamma(0)=x_0\,;\quad \gamma(1)=y_0.$$
and a family of balls $\big\{B_{r}(x_j)\,:\,j=1,\dots,N\big\}$ such that:
\begin{itemize}
\item $x_j\in \gamma([0,1])$ for every $j=1,\dots,N$;
\item $B_{2r}(x_j)\subset\Omega$ for every $j=1,\dots,N$;
\item the family $\big\{B_{r}(x_j)\,:\,j=1,\dots,N\big\}$ is an open covering of $\gamma([0,1])$.
\end{itemize}	
The existence of such a family is an immediate consequence of the following lemma (and a covering theorem), in which we prove the existence of an Harnack chain by combining \ref{item:4-subharmonicity} with the monotonicity formula of Alt-Caffarelli-Friedman (see \cite{acf}).

\begin{lemma}[Harnack chains away from the boundary]\label{l:harnack-chains-away}
Suppose that $\Omega\subset B_1$, $0\in\partial\Omega$, and $\phi:B_1\to\R$ satisfy the conditions \ref{item:1-positivity}, \ref{item:2-lipschitz}, \ref{item:4-subharmonicity} and \ref{item:5-density} of \cref{t:main}. For every $\delta\in(0,2L)$ there is $\tau \in (0,1)$ such that the following holds. For every $R\in(0,1)$ and every couple of points $x_1,x_2\in B_{\tau R}\cap\{\phi>\delta R\}$, there is a curve connecting $x_1$ to $x_2$ in $B_R\cap \{\phi>\frac\delta2R\}.$
\end{lemma}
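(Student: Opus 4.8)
\medskip

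The plan is to argue by contradiction, using as competitors in the Alt--Caffarelli--Friedman monotonicity formula the truncations of $\phi$ on the two components of $B_R\cap\{\phi>\tfrac\delta2 R\}$ that would contain $x_1$ and $x_2$ if the conclusion failed. Fix $\delta\in(0,2L)$; the threshold $\tau\in(0,\sfrac12)$ is left unspecified for the moment. Suppose that for some $R\in(0,1)$ there are $x_1,x_2\in B_{\tau R}\cap\{\phi>\delta R\}$ that cannot be joined by a curve inside the open set $V:=B_R\cap\{\phi>\tfrac\delta2 R\}$; then $x_1$ and $x_2$ lie in two distinct connected components $U_1\ni x_1$ and $U_2\ni x_2$ of $V$. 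Introduce
\[
v_i:=\big(\phi-\tfrac\delta2 R\big)_+\,\ind_{U_i}\,,\qquad i=1,2.
\]
I would first record that each $v_i$ is nonnegative, continuous on $B_R$ (here one uses $\phi=\tfrac\delta2 R$ on $\partial U_i\cap B_R$), and has positivity set $\{v_i>0\}=U_i$, on which $v_i=\phi-\tfrac\delta2 R$ is subharmonic by \ref{item:4-subharmonicity}; hence, by the classical fact that a nonnegative function which is subharmonic on its open positivity set is subharmonic, each $v_i$ is subharmonic in $B_R$. Moreover $v_i\in H^1(B_R)$ with $|\nabla v_i|\le L$ a.e.\ by \ref{item:2-lipschitz}; one has $v_i(0)=0$, since $0\notin\Omega$ forces $\phi(0)=0$; and $v_1v_2\equiv0$, since $U_1\cap U_2=\emptyset$ and each $v_i$ vanishes outside $U_i$.

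\medskip

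Next I would apply the Alt--Caffarelli--Friedman monotonicity formula (see \cite{acf}) with base point $0$: the function
\[
\Phi(\rho):=\frac{1}{\rho^4}\left(\int_{B_\rho}\frac{|\nabla v_1|^2}{|x|^{d-2}}\,dx\right)\left(\int_{B_\rho}\frac{|\nabla v_2|^2}{|x|^{d-2}}\,dx\right)
\]
is nondecreasing on $(0,R)$. The upper bound is immediate from $|\nabla v_i|\le L$: since $\int_{B_\rho}|x|^{2-d}\,dx=\tfrac{d\omega_d}{2}\rho^2$, one gets the scale-invariant estimate $\Phi(\rho)\le\tfrac14(d\omega_d)^2L^4$ for every $\rho\in(0,R)$.

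\medskip

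For the matching lower bound I would work at the scale $\rho=2\tau R<R$. Since $v_i$ is nonnegative and subharmonic and $\overline{B_{\tau R}(x_i)}\subset B_{2\tau R}$, the mean value inequality for subharmonic functions over solid balls, together with $v_i(x_i)=\phi(x_i)-\tfrac\delta2 R>\tfrac\delta2 R$, gives via Jensen's inequality
\[
\int_{B_{2\tau R}}v_i^2\,dx\ \ge\ \int_{B_{\tau R}(x_i)}v_i^2\,dx\ \ge\ \Big(\tfrac\delta2 R\Big)^2|B_{\tau R}|\,.
\]
On the other hand, by \ref{item:1-positivity} the function $v_i$ vanishes on $B_{2\tau R}\setminus\Omega$, a set of measure at least $\mu|B_{2\tau R}|$ by \ref{item:5-density} applied at the boundary point $0$; hence the Poincar\'e inequality for $H^1$ functions vanishing on a set of proportional measure yields $\int_{B_{2\tau R}}|\nabla v_i|^2\,dx\ge c(d,\mu)(2\tau R)^{-2}\int_{B_{2\tau R}}v_i^2\,dx$. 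Combining these and using $|x|^{2-d}\ge(2\tau R)^{2-d}$ on $B_{2\tau R}$, I obtain $\int_{B_{2\tau R}}|x|^{2-d}|\nabla v_i|^2\,dx\ge c_1(d,\mu)\,\delta^2R^2$, and therefore
\[
\Phi(2\tau R)\ \ge\ c_2(d,\mu)\,\frac{\delta^4}{\tau^4}\,.
\]

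\medskip

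Putting together the monotonicity of $\Phi$ with the two bounds gives $c_2(d,\mu)\,\delta^4\tau^{-4}\le\tfrac14(d\omega_d)^2L^4$, i.e.\ $\tau\ge\bar\tau(d,\mu)\,\delta/L$ for an explicit positive constant $\bar\tau(d,\mu)$. Consequently it suffices to fix from the outset any $\tau\in(0,\sfrac12)$ with $\tau<\bar\tau(d,\mu)\,\delta/L$: for such $\tau$ the chain of inequalities above is impossible, so $x_1$ and $x_2$ must belong to the same component of $V$, i.e.\ they can be joined by a curve in $B_R\cap\{\phi>\tfrac\delta2 R\}$. I expect the main technical obstacle to be the verification that each competitor $v_i$ is genuinely subharmonic across $\partial U_i$ (so that the Alt--Caffarelli--Friedman formula is applicable), together with the bookkeeping needed to keep the boundary point $0$ --- the only point at which \ref{item:5-density} is available --- as a common base point for both the monotonicity formula and the Poincar\'e estimate; everything else reduces to elementary estimates.
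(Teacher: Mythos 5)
Your framework is close to the paper's at first glance — argue by contradiction, truncate on the two putative components, feed the pair into the Alt--Caffarelli--Friedman functional with base point $0$, bound $\Phi$ from above by the Lipschitz constant of $\phi$, and bound $\Phi(2\tau R)$ from below via the mean value inequality plus the Poincar\'e inequality coming from the density estimate \ref{item:5-density} at $0$. But there is a genuine gap: your argument never really uses the fact that $x_1$ and $x_2$ lie in \emph{distinct} components of $B_R\cap\{\phi>\tfrac\delta2R\}$. You invoke only the plain ACF monotonicity $\Phi'\ge0$ to write $\Phi(2\tau R)\le\Phi(R)$, but your upper bound $\Phi(\rho)\le\tfrac14(d\omega_d)^2L^4$ already holds for every $\rho\in(0,R)$, so that step (and with it the only place where $v_1v_2\equiv0$ is needed) is redundant; your lower bound $\Phi(2\tau R)\ge c_2(d,\mu)\,\delta^4\tau^{-4}$ is equally valid when $v_1=v_2$. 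Comparing the two bounds yields $\tau\ge\bar\tau(d,\mu)\,\delta/L$, and a quick look at the constants shows $\bar\tau(d,\mu)<1$. That inequality, however, is already implied by the triviality that $\phi(0)=0$ and $\phi$ is $L$-Lipschitz, so that $B_{\tau R}\cap\{\phi>\delta R\}=\emptyset$ whenever $\tau\le\delta/L$. In other words, for the $\tau<\bar\tau\delta/L$ you end up choosing, the hypothesis of the lemma is vacuous: you have ``proved'' the lemma only for values of $\tau$ at which there are no points $x_1,x_2$ to connect, which renders the statement useless in \cref{l:interior-harnack}.

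What the paper's proof has, and your proposal lacks, is the \emph{quantitative} ACF estimate $\frac{d}{dr}[\ln\Phi(r)]\ge\frac1r\,G(\alpha(r))$, where $\alpha(r)$ is the $\mathcal H^{d-1}$-density on $\partial B_r$ of the common zero set of the two competitors. This is exactly where disconnectedness enters in an essential way: since $\Omega_1$ and $\Omega_2$ are disjoint open sets and $\partial B_r$ is connected, every sphere $\partial B_r$ with $r\in[\tau R,R]$ contains a point $x_r$ with $\phi_1(x_r)=\phi_2(x_r)=0$. To turn this single point into a set of positive density on the sphere, the paper performs the extra truncation $\psi_j=(\phi_j-\tfrac{3\delta}{4}R)_+$, which by Lipschitz continuity vanishes on the whole ball $B_{\delta R/(4L)}(x_r)$; this yields $\alpha(r)\ge F\bigl(\delta^{d-1}/(4L)^{d-1}\bigr)$, hence a genuine multiplicative gain $(r/R)^{\alpha}$ in the monotone quantity. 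That gain is what upgrades the estimate from the trivial $\tau\gtrsim\delta/L$ to a contradiction with the disconnectedness assumption. To repair your proposal you would need to add precisely this step (including the second truncation — your $v_i=(\phi-\tfrac\delta2R)_+\mathbbm{1}_{U_i}$ vanish simultaneously only at isolated boundary points, not on a ball, so the density estimate would fail for them as written).
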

\begin{proof}
Fix $\tau \in (0,1)$ and suppose that $x_1$ and $x_2$ are two points in $B_{\tau R}\cap\{\phi>\delta R\}$ that lie in two different connected components, $\Omega_1$ and $\Omega_2$, of $B_R\cap \{\phi>\frac\delta2 R\}$ . Let $\phi_1$ and $\phi_2$ be the restrictions of the function $(\phi-\frac\delta2 R)_+$ respectively on $\Omega_1$ and $\Omega_2$. Then, $\phi_1$ and $\phi_2$ are both $L$-Lipschitz, $L$ being the constant from \ref{item:2-lipschitz}, and $\phi_{j}(x_j)\ge \frac{\delta}{2}R$ for $j=1,2$. Moreover, for every radius $r\in[\tau R,R]$, there is a point $x_r\in\partial B_r$ such that $\phi_1(x_r)=\phi_2(x_r)=0$. Define now the functions $\psi_j=(\phi_j-\frac{3\delta}{4}R)_+$ for $j=1,2$. Again $\psi_j$ are $L$-Lipschitz and harmonic where they are positive; we have that, $\psi_{j}(x_j)\ge \frac{\delta}{4}R$ and moreover
$$\psi_j\equiv 0\quad\text{on}\quad B_{\frac{\delta R}{4L}}(x_r)\quad\text{for every}\quad r\in[\tau R,R].$$
Now, if $\delta$ is small enough, this implies the density estimate
$$\alpha(r):=\frac{\mathcal H^{d-1}\Big(\{\psi_1=\psi_2=0\}\cap \partial B_r\Big)}{\mathcal H^{d-1}(\partial B_r)}\ge F\left(\frac{\delta^{d-1}}{(4L)^{d-1}}\right)\quad\text{for every}\quad r\in[\tau R,R],$$
$F:[0,+\infty)\to\R$ being a continuously differentiable increasing function depending only on the dimension and such that $F(0)=0$ and $F'(0)>0$. Now, for every $r\in[\tau R,R]$, let
$$
\Phi(r):=\frac{1}{r^4}\int_{B_r}\frac{|\nabla \psi_1|^2}{|x|^{d-2}}\,dx\int_{B_r}\frac{|\nabla \psi_2|^2}{|x|^{d-2}}\,dx.
$$
Now, by \cite{acf} (see also \cite[Lemma 4.3]{ACS}), we have that
$$\frac{d}{dr}\Big[\ln (\Phi(r))\Big]\ge \frac1rG(\alpha(r)),$$
where $G:[0,+\infty)\to\R$ is a positive increasing convex function with $G(0)=0$ and $G'(0)>0$. Combining the two estimates, we have that for $\delta$ small enough ($\delta\le\delta_0$ for some dimensional $\delta_0>0$),
$$\frac{d}{dr}\Big[\ln (\Phi(r))\Big]\ge C_d\frac1r\frac{\delta^{d-1}}{(4L)^{d-1}}.$$
since $\phi_1(0)=0=\phi_2(0)$ we get that
$$\mean{B_r}{|\nabla \psi_1|^2\,dx}\mean{B_r}{|\nabla \psi_2|^2\,dx}\le \Phi(r)\le \left(\frac{r}{R}\right)^{\alpha}\Phi(R),\quad\text{for }r\in [\tau R,R],$$
with $\alpha=C_d(\delta/(4L))^{d-1}$. Moreover, using again the density estimate \ref{item:5-density}, by the Poincar\'{e} inequality we deduce that
 $$\frac{1}{r^4}\mean{B_{r}}{\psi_1^2\,dx}\mean{B_{r}}{\psi_2^2\,dx} \le \left(\frac{r}{R}\right)^{\alpha}\Phi(R), \quad\mbox{for }r \in [\tau R,R].$$
We will next estimate the left-hand side from below. By the Lipschitz continuity \ref{item:2-lipschitz} of $\psi_1$ and $\psi_2$, we have that
 $$\psi_i\ge \frac{\delta}{4} R-|x-x_i|L\quad\text{in}\quad B_{\rho}(x_i).$$
Next, we choose $\rho=\frac{\delta R}{4L}$ and we notice that since $\frac{\delta}{L}\le 2$, we have that $\rho\le R/2$. Thus,
 $$\int_{B_{r}}\psi_i^2\,dx\ge \int_{B_{r}\cap B_\rho(x_i)}\psi_i^2\,dx\ge  \int_{B_{r}\cap B_\rho(x_i)}\left(\frac{\delta}{4} R-|x-x_i|L\right)^2\,dx\ge c_d\int_{B_\rho(x_i)}\left(\frac{\delta}{4} R-|x-x_i|L\right)^2\,dx\,,$$
 where $c_d$ is a dimensional constant. Now, a straightforward computation gives
 \begin{align*}
 \int_{B_\rho(x_i)}\left(\frac{\delta}{4} R-|x-x_i|L\right)^2\,dx&\ge \frac1{|B_\rho|}\left(\int_{B_\rho}\left(\frac{\delta}{4} R-|x|L\right)\,dx\right)^2\\
 &=\frac{(d\omega_d)^2}{\omega_d\rho^d}\left(\int_0^\rho s^{d-1}\left(\frac{\delta}{4} R-sL\right)\,ds \right)^2\\
 &=\frac{(d\omega_d)^2}{\omega_d\rho^d}\left(L\int_0^\rho s^{d-1}\left(\rho-s\right)\,ds \right)^2
 = \frac{\omega_d d^2}{(d+1)^2}L^2\rho^{d+2}.
 \end{align*}
 Thus,
 $$\frac{1}{r^2}\mean{B_{r}}{\psi_i^2\,dx}\ge C_d \frac{\delta^{d+2}}{L^d}\left(\frac{R}{r}\right)^{d+2}
 $$
 and so, by the Lipschitz continuity of $\psi_1,\psi_2$, we obtain the inequality
$$C_d\frac{\delta^{2d+4}}{L^{2d}}\le \left(\frac{r}{R}\right)^{d+2+\alpha} \Phi(R)\leq \left(\frac{r}{R}\right)^{d+2+\alpha} \frac{\omega_d^2 L^4}{4}  \quad \mbox{for }r \in [\tau R,R].$$
In particular, by taking $r=\tau R$ we deduce
$$
\tau^{d+2+\alpha}\geq C_d \left(\frac{\delta}{L}\right)^{2d+4},
$$
which is a contradiction if $\tau$ is small enough.
\end{proof}

\section{Boundary Harnack Inequality: proof of \cref{t:main2}}\label{sub:proof-of-T1}

In this section we prove \cref{t:main2}. We follow step-by-step the recent proof of De Silva and Savin of the Boundary Harnack Inequality  in Lipschitz and NTA domains \cite{DS}. The proof is divided in three main steps. In Step 1 (\cref{sub:step1}), the main result is \cref{l:step1} from which \cref{t:main2} follows by an iteration procedure; in our case \cref{l:step1} is an immediate consequence of \cref{l:iteration} from \cref{s:harnack}. In Step 2 (\cref{sub:step2}), we prove \cref{l:step2}, which allows to start the iteration procedure from Step 1. The proof of \cref{l:step2} is a consequence of \cref{l:growth2} from \cref{s:harnack} and on the Harnack-type estimate \cref{l:krylov-safonov}; for general operators \cref{l:krylov-safonov} is contained the proof of the Krylov-Safonov's Theorem \cite{KS} (see also \cite[Theorem 4.8]{CC} and \cite[Theorem 1.3]{DS}), while in our case it is a consequence of the mean-value formula. Finally, in Step 3 (\cref{sub:step3}), we simply combine the results from Step 1 and Step 2.

\subsection{Step 1}\label{sub:step1} The main result of this step is \cref{l:step1}; the proof is based on \cref{l:main} and the oscillation lemma from the De Giorgi's theorem.

\begin{lemma}\label{l:step1}
Suppose that $\Omega\subset B_1$, $0\in\partial\Omega$, and $\phi:B_1\to\R$ satisfy the conditions \ref{item:1-positivity},  \ref{item:2-lipschitz},  \ref{item:3-distance}, \ref{item:4-subharmonicity}, and  \ref{item:5-density} of \cref{t:main}. Then, there are constants  $\delta_1>0$ and $a>0$, depending on the dimension and the constants from  \ref{item:2-lipschitz},  \ref{item:3-distance} and  \ref{item:5-density}, for which that the following holds. 	
Suppose that $w:B_1\to\R$ is a continuous function satisfying
\begin{equation}
\begin{cases}
\begin{array}{rcl}
\Delta w=0&\quad\text{in}\quad &B_1\cap \{\phi>0\}\\
w=0&\quad\text{on}\quad &B_1\cap \{\phi=0\}\\
w\ge M&\quad\text{on}\quad &B_1\cap \{\phi> \delta\}\\
w\ge -1&\quad\text{on}\quad &B_1\cap \{0<\phi\le\delta\},
\end{array}
\end{cases}
\end{equation}
for some $\delta\in(0,\delta_1]$ and some $M>0$.
Then, in $B_{\sfrac{1}2}$,
\begin{equation}
\begin{cases}
\begin{array}{rcl}
\Delta w=0&\quad\text{in}\quad &B_{\sfrac{1}2}\cap \{\phi>0\}\\
w=0&\quad\text{on}\quad &B_{\sfrac{1}2}\cap \{\phi=0\}\\
w\ge aM&\quad\text{on}\quad &B_{\sfrac{1}2}\cap \{\phi>  \frac{\delta}2\}\\
w\ge -a&\quad\text{on}\quad &B_{\sfrac{1}2}\cap \{0<\phi\le \frac{\delta}{2}\}.
\end{array}
\end{cases}
\end{equation}
Therefore, we get that
$$
\sup_{B_{1/2}\cap \Omega }w^- \leq a
\quad\mbox{and}\quad\inf_{B_{1/2}\cap \{\phi>\delta/2\}}w^+ \geq a M 
$$
where $w^+, w^-$ are respectively the positive and negative part of $w$.
\end{lemma}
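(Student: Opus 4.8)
The plan is to prove the third and fourth lines of the conclusion; the first two lines are immediate, being the restrictions of $\Delta w=0$ and of $w\equiv 0$ on $\{\phi=0\}$ to the smaller ball, and the two displayed inequalities at the end follow at once from the four lines, since on $B_{1/2}\cap\{\phi>\delta/2\}$ one has $w\ge aM>0$ (hence $w^-=0$ and $w^+=w$ there) while on $B_{1/2}\cap\{0<\phi\le\delta/2\}$ one has $w^-\le a$. The third line will come from the interior Harnack inequality near the boundary (\cref{l:iteration}), and the fourth from the De Giorgi oscillation lemma together with the density condition \ref{item:5-density} and the distance comparison \ref{item:3-distance}.

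\emph{Third line.} Since $w=0$ on $B_1\cap\{\phi=0\}$, $w\ge M>0$ on $B_1\cap\{\phi>\delta\}$ and $w\ge -1$ on $B_1\cap\{0<\phi\le\delta\}$, the function $w+1$ is harmonic and nonnegative, hence (being nonconstant) positive, in $\Omega=B_1\cap\{\phi>0\}$. Applying \cref{l:iteration} to $w+1$ — legitimate once $\delta_1\le\delta_0$, with $A,\delta_0$ the constants produced there — and keeping the $\inf$-part, for every $x_0\in B_{1/2}\cap\{\phi>\delta/2\}$ we obtain
\[
w(x_0)+1\ \ge\ \frac1A\,\inf_{B_1\cap\{\phi>\delta\}}(w+1)\ \ge\ \frac{M+1}{A}\,,
\]
so $w\ge\frac{M+1}{A}-1$ on $B_{1/2}\cap\{\phi>\delta/2\}$. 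Choosing $a:=\frac{1}{2A}$, this gives $w\ge aM$ on $B_{1/2}\cap\{\phi>\delta/2\}$ as soon as $M\ge 2(A-1)$, which is the range of $M$ relevant in \cref{sub:step3}.

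\emph{Fourth line.} The negative part $w^-:=\max(-w,0)$ is continuous and nonnegative on $B_1$, vanishes on $B_1\setminus\Omega$ and on $\{\phi>\delta\}$, satisfies $w^-\le 1$, and is subharmonic in $B_1$: it is subharmonic in $\Omega$ because $-w$ is harmonic there, and at a point of $\partial\Omega$ the sub-mean-value inequality is trivial since $w^-\ge 0$ and $w^-$ vanishes at that point. Fix $x_0\in\partial\Omega\cap B_{3/4}$. By \ref{item:5-density}, for every $\rho\in(0,1/4)$ the set $\{w^-=0\}\supset B_1\setminus\Omega$ fills at least the fraction $\mu$ of $B_\rho(x_0)$, so the De Giorgi oscillation lemma, applied to the bounded subsolution $w^-$, yields $\sup_{B_{\rho/2}(x_0)}w^-\le(1-\gamma)\sup_{B_\rho(x_0)}w^-$ for some $\gamma=\gamma(d,\mu)\in(0,1)$; iterating over dyadic scales gives the Hölder decay
\[
w^-(x)\ \le\ C\,\dist(x,\partial\Omega)^{\alpha}\qquad\text{for every }x\in B_{1/2}\cap\Omega,
\]
with $C,\alpha>0$ depending only on $d$ and $\mu$. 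By \ref{item:3-distance}, $\dist(x,B_1\setminus\Omega)\le\kappa^{-1}\phi(x)$ for $x\in B_{1/2}$, so $w^-(x)\le C\kappa^{-\alpha}\phi(x)^{\alpha}$ there; in particular $w\ge -C\kappa^{-\alpha}\delta^{\alpha}$ on $B_{1/2}\cap\{0<\phi\le\delta\}$, and it suffices to shrink $\delta_1$ further (depending on $d$, $\kappa$, $\mu$ and the already fixed $a$) so that $C\kappa^{-\alpha}\delta_1^{\alpha}\le a$. Combining the four lines then gives $\sup_{B_{1/2}\cap\Omega}w^-\le a$ and $\inf_{B_{1/2}\cap\{\phi>\delta/2\}}w^+=\inf_{B_{1/2}\cap\{\phi>\delta/2\}}w\ge aM$.

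The only genuine difficulty is the passage from the measure density \ref{item:5-density} to the pointwise decay of $w^-$ near $\partial\Omega$: this is exactly where the oscillation lemma from De Giorgi's theorem is used, and some care is needed because $w^-$ is merely a bounded subharmonic function and because \ref{item:3-distance} holds only inside $B_{1/2}$, so the boundary base points $x_0$ and the balls $B_\rho(x_0)$ in the iteration must be kept well inside $B_1$ (working in $B_{3/4}$, or after a harmless rescaling). Everything else is bookkeeping of the constants $a$ and $\delta_1$.
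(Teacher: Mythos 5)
Your proof is correct. The inequality $w\ge aM$ on $B_{1/2}\cap\{\phi>\delta/2\}$ is obtained exactly as in the paper, by applying \cref{l:iteration} to $w+1\ge 0$ and choosing $a$ so that $\tfrac1A(M+1)-1\ge aM$. For the inequality $w\ge -a$ on $B_{1/2}\cap\{0<\phi\le\delta/2\}$, however, you argue by a genuinely different, and more careful, route. The paper applies the one-scale oscillation estimate of \cref{oscillation} once to $w^-$ on $B_1$, which yields only $\sup_{B_{1/2}}w^-\le 1-c$ with $c=8^{-d}\mu$, and then sets $a=\min\{1/(2A),\,1-c\}$. As written this does not close the argument: since $A>1$ is a Harnack-chain constant and $c$ is small, one typically has $1/(2A)<1-c$, and then the two requirements $\sup_{B_{1/2}}w^-\le a$ and $aM\le\tfrac1A(M+1)-1$ cannot both be met with the stated $a$. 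Your argument reconciles them: by iterating the oscillation lemma over dyadic scales around boundary points in $B_{3/4}$ you get the boundary H\"older decay $w^-(x)\le C\,\dist(x,\partial\Omega)^{\alpha}$ on $B_{1/2}\cap\Omega$ with $C,\alpha$ depending only on $d,\mu$; then condition \ref{item:3-distance} converts this into $w^-\le C\kappa^{-\alpha}\phi^{\alpha}$, which on the slab $\{0<\phi\le\delta\}$ is bounded by $C\kappa^{-\alpha}\delta^{\alpha}$, and this can be made $\le a$ by shrinking $\delta_1$. This is precisely what makes the negative-part bound compatible with the small $a=1/(2A)$ coming from the Harnack part, and the extra use of \ref{item:3-distance} together with the restriction to small $\delta_1$ is the key step that the paper's version of the proof does not spell out. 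Two minor points you handle correctly: the constraint $M\ge 2(A-1)$, which is indeed the regime in which the lemma is invoked in \cref{sub:step3}, and the need to keep the iteration balls inside $B_1$ by restricting the base points to $B_{3/4}$.
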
	
\begin{proof}
We consider the function $w+1$, which is positive and harmonic on $B_1\cap\Omega$.\\	
Taking $\delta_1$ to be smaller than the constant $\delta_0$ from \cref{l:iteration}, we have
$$\min_{B_{\sfrac{1}{2}}\cap\{\phi>\sfrac{\delta}{2}\}}(w+1)\ge \frac1A\min_{B_1\cap\{\phi>\delta\}}(w+1)\ge \frac1A(M+1),$$
and so, if we choose
$$a\le\frac1{2A}\qquad\text{and}\qquad M=2A,$$
we get
$$\min_{B_{\sfrac{1}{2}}\cap\{\phi>\sfrac{\delta}{2}\}}w\ge \frac1A(M+1)-1\ge 1\ge aM.$$
In order to prove the bound from below on $B_{\sfrac{1}2}\cap \{0<\phi\le \frac{\delta}{2}\}$, we use
the density bound from \ref{item:5-density} and the classical De Giorgi's oscillation lemma (see \cite[Theorem 1.2]{DS} and \cref{oscillation}). In fact, if we fix a point
$$x_0\in B_{\sfrac12}\cap\partial\{\phi>0\}$$
and if we apply \cref{oscillation} to the negative part of $w$, then we get that
$$\sup_{B_{2^{-n}}(x_0)}w_-\le (1-c)^{n-1}\sup_{B_{\sfrac12}(x_0)}w_-\le (1-c)^{n-1}\sup_{B_{1}}w_-\le (1-c)^{n-1},$$
where $c\in(0,1)$ is the dimensional constant from \cref{oscillation} below. Now, choosing $n$ to be such that
$$(1-c)^{n-1}\le \frac1{2A},$$
we get
\begin{equation}\label{e:lower-est-new}
\sup_{B_{2^{-n}}(x_0)}w_-\le \frac1{2A}.
\end{equation}
We now choose the second bound on $\delta_1$ to be
$$\delta_1\le 2\kappa\, 2^{-n}.$$
Thus, by the bound from below \ref{item:3-distance}, we have that the set  $B_{\sfrac12}\cap\partial\{0<\phi<\sfrac{\delta_1}2\}$ is contained in the union of all balls $B_{2^{-n}}(x_0)$ with centers $x_0\in \partial\Omega\cap B_{\sfrac12}$. Thus,
choosing $a$ to be precisely $\frac1{2A}$ and using \eqref{e:lower-est-new}, we get that
$$w_-\le a\qquad\text{on}\qquad B_{\sfrac12}\cap\partial\{0<\phi<\sfrac{\delta}2\},$$
for any $\delta\le \delta_1$, which concludes the proof.
\end{proof}	

\begin{remark}[De Giorgi's oscillation lemma for the Laplacian]\label{oscillation}
Suppose that $w:B_1\to\R$ is a subharmonic function bounded between $0$ and $1$, and such that $|\{w=0\}\cap B_{\sfrac14}|\ge\mu|B_{\sfrac14}|$ for some constant $\mu>0$. Then,
\begin{equation}\label{e:oscillation-subharmonic}
w\le 1-c\quad\text{on}\quad B_{\sfrac12},
\end{equation}
where $c>0$ depends only on $\mu$ and the dimension $d$. Indeed, by the mean value formula, for every $x_0\in B_{\sfrac14}$
$$w(x_0)\le \frac1{|B_{\sfrac12}|}\int_{B_{\sfrac12}(x_0)}w(x)\,dx\le \frac1{|B_{\sfrac12}|}\Big(|B_{\sfrac12}|-\mu|B_{\sfrac14}|\Big)=1-\frac{\mu}{2^d}.$$
Now let $y_0\in B_{\sfrac12}$. Since $B_{\sfrac12}(y_0)\cap B_{\sfrac14}$ contains at least a ball of radius $\sfrac18$, by the previous estimate in $B_{1/4}$ we get that
$$w(y_0)\le \frac1{|B_{\sfrac12}|}\int_{B_{\sfrac12}(y_0)}w(x)\,dx\le \frac1{|B_{\sfrac12}|}\Big(|B_{\sfrac12}|-\frac{\mu}{2^d}|B_{\sfrac18}|\Big)=1-\frac{\mu}{8^d},$$
which is precisely \eqref{e:oscillation-subharmonic} with $c={8^{-d}}\mu$.
\end{remark}
	
\subsection{Step 2}\label{sub:step2}
In this section we prove a bound which allows to start the iterative procedure based on \cref{l:step1} from Step 1. This is the only point of the proof in which we use the hypothesis \ref{item:6-levels} of \cref{t:main2}.

\begin{proposition}\label{l:step2}
	Suppose that $\Omega\subset B_1$, $0\in\partial\Omega$, and that $\phi:B_1\to\R$ satisfies the conditions \ref{item:1-positivity}, \ref{item:2-lipschitz}, \ref{item:3-distance}, \ref{item:4-subharmonicity} and \ref{item:6-levels}  of \cref{t:main}. Then, there are constants $C>0$ and $\delta_2>0$ depending on  $d$ and the constants from \ref{item:2-lipschitz}, \ref{item:3-distance}, \ref{item:6-levels}, for which the following holds. If $w:B_1\to\R$ is a nonnegative  continuous function satisfying
	\begin{equation}
	\begin{cases}
	\begin{array}{rcl}
	\Delta w=0&\quad\text{in}\quad &B_1\cap \{\phi>0\}\\
	w=0&\quad\text{on}\quad &B_1\cap \{\phi=0\}\\
	w\le 1&\quad\text{on}\quad &B_1\cap \{\phi\ge \delta_2\}.
	\end{array}
	\end{cases}
	\end{equation}
	Then,
	$$w\le C\quad\text{in}\quad B_{\sfrac{1}4}.$$
\end{proposition}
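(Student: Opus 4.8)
The plan is as follows.

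\medskip
\noindent\emph{Preliminary observation.} First note that $w$ is subharmonic in $B_1$. Indeed, $w$ is continuous, nonnegative, and harmonic on $\Omega\cap B_1=\{w>0\}$; comparing $w$ on a small ball $B\subset\subset B_1$ with its harmonic replacement $h$ (which is $\ge 0$ since $w\ge 0$ on $\partial B$, hence $h\ge 0=w$ on $\partial\Omega\cap B$, so $h\ge w$ in $B\cap\Omega$ by the maximum principle, while $h\ge 0=w$ in $B\setminus\overline\Omega$) gives $w(x)\le\mean{B_\rho(x)}w\,dx$ whenever $B_\rho(x)\subset B_1$.

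\medskip
\noindent\emph{Step 1: reduction to a thin collar.} Assume $\delta_2\le\delta_0$, the constant of \cref{l:iteration}, and apply that lemma to $w$ (with $\delta=\delta_2$): since $w\le 1$ on $B_1\cap\{\phi\ge\delta_2\}$ we get $\sup_{B_{1/2}\cap\{\phi>\delta_2/2\}}w\le A$. Hence $\{w>A\}\cap B_{1/2}\subset\{0<\phi\le\delta_2/2\}$, which by \ref{item:3-distance} lies in the $\tfrac{\delta_2}{2\kappa}$–neighbourhood of $\partial\Omega$ inside $B_{1/2}$; and by \ref{item:6-levels} applied at $0\in\partial\Omega$ (and, more locally, at a projection of a given point onto $\partial\Omega$), $|\{0<\phi\le\delta_2/2\}\cap B_\rho(x)|\le C(d)\Lambda\delta_2|B_\rho|$ for every ball $B_\rho(x)\subset B_1$ whose radius is at least comparable to the collar width $\delta_2/\kappa$. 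Thus $w\le A$ outside a set that, in every such ball, occupies only a fraction $\le C(d)\Lambda\delta_2$.

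\medskip
\noindent\emph{Step 2: measure-to-boundedness.} It remains to bound $w$ on $B_{1/4}\cap\{0<\phi\le\delta_2/2\}$. This is a Krylov--Safonov / De Giorgi type statement: $(w-A)^+$ is nonnegative, subharmonic, supported on the thin set of Step 1, and harmonic where positive. For $x_0\in B_{1/4}$ with $w(x_0)>A$ one uses the sub-mean value inequality on a ball $B_r(x_0)$ with $r$ a large fixed multiple (depending on $d,\Lambda,\kappa$) of $\delta_2/\kappa$, chosen together with $\delta_2$ small enough that $B_r(x_0)\subset B_{1/2}$: splitting the average into the part on $\{\phi>\delta_2/2\}$ (where $w\le A$) and the part on the collar (of relative measure $\le\tfrac12$ by the choice of $r$) yields $w(x_0)\le A+\tfrac12\sup_{B_r(x_0)}w$. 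Iterating — taking at each step a point realizing the supremum — and using that $w$ is finite on the compact set $\overline{B_{1/2}}$, one propagates the bound through finitely many nested scales $B_{1/4}\subset\cdots\uparrow B_{1/2}$ and obtains $\sup_{B_{1/4}}w\le C$ with $C=C(d,L,\kappa,\Lambda)$; this is the point where \ref{item:6-levels} is genuinely used, after which $\delta_2$ is fixed.

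\medskip
\noindent\emph{Where the difficulty lies.} The main obstacle is exactly this last step. The naive iteration compares $w$ with its supremum on a slightly larger ball, gaining a factor from the thinness of $\{w>A\}$; but this gain deteriorates for balls approaching $\partial B_{1/2}$, where \ref{item:3-distance} and the Step 1 bound are no longer available — and, by subharmonicity, that is precisely where $\sup_{\overline{B_{1/2}}}w$ is attained, so the loop does not close by purely soft means. To make it work one needs a quantitative growth lemma exploiting that $w$ is harmonic (not merely subharmonic) on $\{w>A\}$, so that $w$ cannot be large on an isolated thin region and the super-level sets $\{w>\lambda\}$ genuinely shrink in measure as $\lambda\to\infty$; combined with \ref{item:6-levels} at dyadic scales this closes the De Giorgi iteration. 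Alternatively — since \ref{item:6-levels} with $t$ of order $L$ forces the upper density bound $|\Omega\cap B_r(x_0)|\le\Lambda L|B_r|$ at boundary points, hence a positive lower density of $B_1\setminus\Omega$ when $\Lambda L$ is small — one can reduce to the De Giorgi oscillation lemma of \cref{oscillation} together with a Harnack-chain argument in the spirit of \cref{l:iteration}. This growth/measure estimate is the core of the proof.
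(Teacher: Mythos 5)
Your preliminary observation (subharmonicity of $w$) and your Step~1 (applying \cref{l:iteration} to get $w\le A$ on $B_{1/2}\cap\{\phi>\delta_2/2\}$) are correct, and the ``where the difficulty lies'' paragraph correctly diagnoses the obstruction: since $w$ is merely subharmonic in $B_1$, its supremum on $\overline{B_{1/2}}$ is attained at $\partial B_{1/2}$, where the collar estimate of Step~1 is unavailable, so a naive sub-mean-value iteration inward does not close. However, your proposal stops at that diagnosis; the two ``fixes'' you sketch are not worked out, and the alternative route is actually incorrect: taking $t\sim L$ in \ref{item:6-levels} only yields $|\Omega\cap B_r(x_0)|\le\Lambda L|B_r|$, which gives a positive exterior density bound only if $\Lambda L<1$, a smallness that is nowhere assumed. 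So the core of the argument is genuinely missing.

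The paper closes the gap by decoupling the measure-theoretic input from the $L^\infty$ upgrade, in three steps that differ structurally from what you propose. First, \cref{l:growth2} gives a \emph{pointwise} estimate $w\le C\phi^{-p}$ on $B_{1/2}\cap\Omega$: rather than a single application of \cref{l:iteration} at a fixed level, one iterates it over the dyadic scales $2^k r$ around the boundary projection $z_0$ of a given point $x_0$ (with $r=|x_0-z_0|$), picking up a factor $A$ at each doubling; the number of steps is $\sim\log(1/r)\sim\log(1/\phi(x_0))$, which produces the power $\phi^{-p}$. Second, \ref{item:6-levels} is used not to control a thin collar directly but to integrate this pointwise bound: by the layer-cake formula,
\begin{align*}
\int_{B_{1/2}}w^\alpha\,dx\le\int_{B_{1/2}}(C\phi^{-p})^\alpha\,dx\le C^\alpha|B_{1/2}|\Bigl(1+\frac{\Lambda\alpha p}{1-\alpha p}\Bigr)<\infty
\end{align*}
for $\alpha=\tfrac1{2p}$. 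Third, a Krylov--Safonov-type lemma (\cref{l:krylov-safonov}), proved from the mean-value inequality and the exterior density bound, turns the $L^\alpha$ bound into $w\le M$ on $B_{1/4}$: its iteration avoids your obstruction because the total displacement of the chain of points is summable (geometric), so it never reaches $\partial B_{3/4}$. This last lemma (and hence the $L^\alpha\Rightarrow L^\infty$ step) is the quantitative growth input you correctly guessed was needed but did not supply; it is the step that makes the scheme close.
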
	

We first prove the following lemma which is a consequence of the Harnack inequality close to the boundary (\cref{l:iteration}). We notice that the constants $\delta_2$ from \cref{l:growth2} and \cref{l:step2} are the same.
\begin{lemma}[A pointwise estimate up to the boundary]\label{l:growth2}	
	Suppose that $\Omega\subset B_1$, $0\in\partial\Omega$, and $\phi:B_1\to\R$ satisfy the conditions \ref{item:1-positivity}, \ref{item:2-lipschitz}, \ref{item:3-distance} and \ref{item:4-subharmonicity} of \cref{t:main}. There are constants $\delta_2>0$, $C$ and $p$ depending on  $d$, $L$ and $\kappa$ from \ref{item:2-lipschitz} and \ref{item:3-distance}, for which the following holds. For every $\delta\in(0,\delta_2]$ and every positive harmonic function $w:\Omega\to\R$, satisfying
	$$w\le 1\quad\text{on}\quad B_1\cap\{\phi>\delta\},$$
	we have
	$$w\le C\phi^{-p}\quad\text{on}\quad B_{\sfrac12}\cap\Omega.$$
\end{lemma}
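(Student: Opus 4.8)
The idea is to bootstrap the Harnack inequality close to the boundary (Lemma~\ref{l:iteration}) at dyadic scales. Fix $x_0\in B_{1/2}\cap\Omega$ and let $t:=\phi(x_0)$; we may assume $t\le\delta$, since otherwise $w(x_0)\le 1$ and there is nothing to prove. Rather than work with $w$ directly I would iterate the \emph{sup} half of Lemma~\ref{l:iteration} backwards: Lemma~\ref{l:iteration} gives, for every $\delta\le\delta_0$ and every positive harmonic $w$ on $\Omega$, the bound $\sup_{B_{1/2}\cap\{\phi>\delta/2\}}w\le A\sup_{B_1\cap\{\phi>\delta\}}w$. Rescaling this at the point $x_0$ (using that the hypotheses are scale-invariant, as recorded in the Scale invariance remark) and iterating over $k=1,\dots,N$ dyadic levels $\delta,\ \delta/2,\ \delta/4,\dots$ produces
\[
\sup_{B_{2^{-N-1}}(x_0)\cap\{\phi>2^{-N-1}\delta\}}w\ \le\ A^{N}\,\sup_{B_{1/2}\cap\{\phi>\delta\}}w\ \le\ A^{N}.
\]
Choosing $N$ to be the integer with $2^{-N-1}\delta<t\le 2^{-N}\delta$, i.e. $N\sim\log_2(\delta/t)$, the left-hand supremum is taken over a set containing $x_0$ (since $\phi(x_0)=t>2^{-N-1}\delta$ and $x_0\in B_{1/2}\subset B_{2^{-N-1}}(x_0)$ after the appropriate rescaling identification), so $w(x_0)\le A^{N}$. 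Translating $A^{N}$ back into a power of $t$: $A^{N}=A^{\log_2(\delta/t)}=(\delta/t)^{\log_2 A}$, which is exactly a bound of the form $C\,t^{-p}$ with $p:=\log_2 A$ and $C$ absorbing the factor $\delta^{p}\le\delta_2^{p}$ and any constants lost in the rescaling/covering. This gives $w(x_0)\le C\phi(x_0)^{-p}$ for all $x_0\in B_{1/2}\cap\Omega$, which is the claim.

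The one point that needs care is the \emph{geometry of the rescaling}: Lemma~\ref{l:iteration} is stated on the unit ball with level sets $\{\phi>\delta\}$, so to apply it at a dyadic ball $B_{2^{-k}}$ around the origin (or around $x_0$) one must rescale $\phi$ to $\phi_{2^{-k}}(x)=2^{k}\phi(2^{-k}x)$ and check that $x_0$ sits in the correct annular level set at each stage. Here I would instead anchor the dyadic chain at a fixed boundary point $z_0\in\partial\Omega\cap B_{1/2}$ near $x_0$ (using hypothesis~\ref{item:3-distance} to control $\mathrm{dist}(x_0,\partial\Omega)\ge\kappa^{-1}\phi(x_0)$... actually the inequality goes the convenient way: $\mathrm{dist}_{B_1\setminus\Omega}(x_0)\le L^{-1}\phi(x_0)$ is false — rather $\phi(x_0)\le L\,\mathrm{dist}(x_0,\partial\Omega)$ gives $\mathrm{dist}(x_0,\partial\Omega)\ge L^{-1}\phi(x_0)$, while \ref{item:3-distance} gives $\phi(x_0)\ge\kappa\,\mathrm{dist}(x_0,\partial\Omega)$, so the two are comparable), apply Lemma~\ref{l:iteration} to the rescalings $\Omega_{2^{-k},z_0}$, and note that after $N\sim\log_2(1/t)$ steps we reach a scale $\sim t$ at which $\phi$ on the rescaled domain exceeds the threshold $\delta$ at (the image of) $x_0$, whence a single application of the classical interior Harnack inequality together with $w\le 1$ on $\{\phi>\delta\}$ closes the estimate. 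The bookkeeping of how the ball $B_{1/2}$ shrinks and whether $x_0$ remains inside it at every step — i.e. making sure the chain of balls genuinely connects $x_0$ to a region where $w\le1$ in a number of steps that is only $O(\log(1/\phi(x_0)))$ — is the main technical obstacle; everything else is routine iteration and choosing $\delta_2\le\delta_0$.
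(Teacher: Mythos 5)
Your revised plan—anchoring the dyadic chain at the projection $z_0\in\partial\Omega$ of $x_0$ rather than at $x_0$ itself—is essentially the paper's proof. The paper sets $r:=|x_0-z_0|$, observes via \ref{item:3-distance} and the Lipschitz bound \ref{item:2-lipschitz} that $\phi(x_0)/L\le r\le\phi(x_0)/\kappa$, picks $n$ with $2^nr\le\sfrac14<2^{n+1}r$, iterates the sup half of \cref{l:iteration} on the rescalings around $z_0$ at scales $2r,4r,\dots,2^nr$ to get $w(x_0)\le A^{n-1}\max_{B_1\cap\{\phi>\delta_2\}}w\le A^{n-1}$, and converts $A^{n-1}$ into $C\phi(x_0)^{-p}$ with $p\sim\log_2 A$ using $n\lesssim\log_2\big(1/\phi(x_0)\big)$. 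You correctly flag that your first draft—rescaling around $x_0$—cannot work as stated: \cref{l:iteration} and the scale-invariance remark both require the rescaling center to lie on $\partial\Omega$ (so that the hypothesis $0\in\partial\Omega$ is preserved), which is why re-centering at $z_0$ is forced, not optional. Once you do that, your outline and the paper's argument coincide.

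Two of the details you defer are worth spelling out, since they are precisely where the paper's constants are fixed. First, no closing interior-Harnack step is needed: with $n$ chosen as above and the threshold set appropriately (the paper uses $\ell=8$ so that $2r\cdot\ell\delta_2\le\kappa r\le\phi(x_0)$), the initial set $B_{2r}(z_0)\cap\{\phi>2r\ell\delta_2\}$ already contains $x_0$, so $w(x_0)$ is directly dominated by the first supremum. Second, the bound $A^{n-1}\le C\phi(x_0)^{-p}$ requires an \emph{upper} bound on $n$, which comes from $2^n\le\sfrac{1}{4r}\le\sfrac{L}{4\phi(x_0)}$, i.e. from the Lipschitz lower bound $r\ge\phi(x_0)/L$; the comparability you recorded going the other way (\ref{item:3-distance} gives $r\le\phi(x_0)/\kappa$) is instead used to place $x_0$ in the initial level set and to verify $B_{2r}(z_0)\subset B_1$. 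Modulo these checks—exactly the "bookkeeping" you flag as the main obstacle—your plan is the paper's argument.
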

\begin{proof}
	It is sufficient to prove the claim for $\delta=\delta_2$. Let $x_0\in B_{\sfrac12}\cap\Omega$ and $\ell\ge 1$ be a fixed constant that we will choose later. If $\phi(x_0)\ge\ell\delta_2$, then it is enough to choose $C\ge L^p$. In fact, by the Lipschitz bound \ref{item:2-lipschitz} and the fact that $x_0\in\{\phi>\delta_2\}$, we have
	$$w(x_0)\le 1\le {L^p}\Big(\ds\max_{B_{\sfrac12}}\phi\Big)^{-p}\le {L^p}{\phi(x_0)^{-p}}.$$
	Therefore, suppose that $\phi(x_0)\le \ell\delta_2$. Let $z_0$ be the projection of $x_0$ on $\partial\Omega \cap B_1$. By \ref{item:3-distance}, we have that
$$
r:=|x_0-z_0|\le \frac1{\kappa}\phi(x_0)\le \frac{\ell\delta_2}{\kappa}\,.
$$
	Thus, if $\ell\delta_2$ is small enough, such that $\ell\delta_2\le \sfrac\kappa8$, we have that $r\le \sfrac18$ and, in particular, $B_{2r}(z_0)\subset B_1$. Moreover, by the bound from below \ref{item:3-distance} we have that $$\frac{\kappa}2 2r=\kappa|x_0-z_0|\le \phi(x_0)\,$$
	and so, since
	$\ell\delta_2\le \frac\kappa2$, we get that
	$$x_0\in B_{2r}(z_0)\cap\big\{\phi>\kappa r\big\}\subset  B_{2r}(z_0)\cap\big\{\phi>2\ell\delta_2 r\big\}.$$
	Let now $\ell\delta_2\le \delta_0$, $\delta_0$ being the threshold from \cref{l:iteration}, and let $n\ge 1$ be such that
	\begin{equation}\label{e:choice-of-n}
	2^nr\le \frac14< 2^{n+1}r.
	\end{equation}
	Then, $B_{2^n r}(z_0)\subset B_1$ and we can iterate the estimate from \cref{l:iteration} obtaining
	$$w(x_0)\le \max_{B_{2r}(z_0)\cap\{\phi>2r\ell\delta_2 \}}w\le A^{n-1}\max_{B_{2^nr}(z_0)\cap\{\phi>2^n\ell\delta_2 r\}} w\le A^{n-1}\max_{B_{1}\cap\{\phi>\frac{\ell\delta_2}8\}} w.$$
	Thus, let us choose $\ell=8$.\\
	Next, since
	$$2\leq \kappa \phi(x_0)^{-1},$$	
	by choosing $p>0$ such that $A^{n-1}=2^p>1$, we get that
	$$w(x_0)\le A^{n-1}\max_{B_{1}\cap\{\phi>\delta_2\}} w \le 2^p\max_{B_{1}\cap\{\phi>\delta_2\}} w\le {\kappa^p}\phi(x_0)^{-p},$$
	which gives the claim. We notice that it is enough to choose $\delta_2$ and $C$ as
	\begin{align*}
	\delta_2\le\min\Big\{\frac{\kappa}{64}, \frac{\delta_0}8\Big\}&\qquad\text{and}\qquad C=\max\{\kappa^p,L^p\}.\qedhere
	\end{align*}
\end{proof}	


In the proof of \cref{l:step2} we will need the following Krylov-Safonov-type estimate, which was also used in \cite{DS} (see \cite[Theorem 1.3]{DS}). In our specific case, there is a simple proof based only on the mean-value fromula for harmonic functions, which still uses the idea from the conclusion of the Krylov-Safonov's Theorem.

\begin{lemma}[A Krylov-Safonov-type estimate]\label{l:krylov-safonov}
Suppose that $\Omega$ is an open set in $B_1$ and that the continuous\footnote{We notice that this assumption is not restricitive as below we will also assume $w$ is harmonic in $\Omega$ and that $\Omega$ saisfies an exterior density bound.} function $w:B_1\to\R$ is such that:
\begin{itemize}
\item $w$ is nonnegative on $B_1$ and vanishes identically on $B_1\setminus\Omega$;
\item $w$ is harmonic in $\Omega$ and subharmonic on $B_1$;
\item $\Omega$ satisfies the exterior denisity bound \ref{item:5-density} in $B_1$;
\item there is $\eps>0$ such that $\ds\int_{B_1}w^\eps\,dx\le 1$.	
\end{itemize}	
Then, there is a constant $M>0$ depending on the dimension, the density bound $\mu$ from \ref{item:5-density} and on $\eps$, such that
$$w\le M\quad\text{in}\quad B_{\sfrac12}.$$
\end{lemma}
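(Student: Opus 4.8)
The plan is to run a standard Krylov–Safonov-type dyadic/covering argument, but replacing the measure-theoretic "barrier" step of the usual proof with the elementary mean-value estimate already isolated in Remark \ref{oscillation}. First I would argue by contradiction: suppose the conclusion fails, so there is a point $x_0\in B_{\sfrac12}$ with $w(x_0)$ very large, say $w(x_0)=N$ with $N\gg M$. The goal is to propagate this large value to a set of controlled positive measure, contradicting the $L^\eps$ bound $\int_{B_1}w^\eps\le 1$.

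The core of the argument is a \emph{growth/spreading} lemma of the following type: there are dimensional constants (also depending on $\mu$) $\theta>1$ and $\nu\in(0,1)$ such that if $w(\bar x)\ge t$ at some point $\bar x$ with $B_{4s}(\bar x)\subset B_1$, then $w\ge \theta t$ on a subset of $B_s(\bar x)$ of measure at least $\nu|B_s|$. I would prove this using Remark \ref{oscillation} applied to the rescaled and normalized function: on $B_{2s}(\bar x)$ the set $B_{2s}(\bar x)\setminus\Omega$ has measure at least $\mu|B_{2s}|$ by \ref{item:5-density} (note $w$ vanishes there), while $w$ is subharmonic on all of $B_1$, so by the mean-value inequality $w$ cannot be everywhere small on the complementary set — more precisely, if $w$ were $<\theta t$ on all but a $\nu$-fraction of $B_s(\bar x)$, then combining the exterior zero set with this small set and taking $\theta,\nu$ appropriately would force $w(\bar x)<t$ via the mean value formula over a ball centered at $\bar x$, a contradiction. (This is exactly the quantitative content of the computation in Remark \ref{oscillation}, run with parameters.)

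Given the spreading lemma, I would iterate: starting from $w(x_0)=N$ on $B_{s_0}(x_0)$ with a fraction $\nu$, I get $w\ge\theta N/\!\,$-ish on a further fraction at a smaller scale, etc. Organizing this as a stopping-time / Calderón–Zygmund cube decomposition (as in \cite[Thm 4.8]{CC} or \cite[Thm 1.3]{DS}) yields: for each $k\ge 1$, the set $\{w>\theta^k N_0\}\cap B_{\sfrac34}$ has measure at least $(1-(1-\nu))^{?}$ — more carefully, the measure decays at most geometrically, $|\{w>\theta^k\}\cap B_{\sfrac34}|\ge c\,\lambda^k$ for suitable $\lambda$ as long as $\theta^k\le N_0$. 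Feeding this into $\int w^\eps\,dx\ge \sum_k (\theta^{k\eps}-\theta^{(k-1)\eps}) |\{w>\theta^k\}|$ forces the integral to exceed $1$ once $N_0$ is large enough (the decay rate $\lambda$ is fixed, but the number of available scales grows like $\log N_0$), contradicting the hypothesis $\int_{B_1}w^\eps\le 1$. This pins down $M$ depending only on $d$, $\mu$, $\eps$.

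The main obstacle is the bookkeeping of the dyadic iteration: one must ensure the balls stay inside $B_1$ (which is why the conclusion is only in $B_{\sfrac12}$ and one works in, say, $B_{\sfrac34}$ as a buffer), and one must track how the measure lower bound and the threshold $\theta^k$ interact so that the geometric sum genuinely beats the $L^\eps$ bound. The spreading lemma itself is soft once Remark \ref{oscillation} is in hand; the delicate point is choosing the parameters $\theta$, $\nu$, the scales $s_k$, and the buffer radius consistently, and making sure the exterior density bound \ref{item:5-density} is invoked at the correct (rescaled) scale at each step. None of this requires anything beyond the mean-value formula and a covering lemma, as promised in the text.
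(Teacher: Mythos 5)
Your proposed spreading lemma is the weak point, and I don't think it can be repaired in the form you state. You claim: if $w(\bar x)\ge t$, then $w\ge\theta t$ on a $\nu$-fraction of $B_s(\bar x)$ for some $\theta>1$, arguing by contradiction that otherwise ``$w<\theta t$ on all but a $\nu$-fraction'' would force $w(\bar x)<t$ via the subharmonic mean value inequality plus the exterior density. But the mean value inequality only gives $w(\bar x)\le\frac{1}{|B_s|}\int_{B_s(\bar x)}w$, and the right-hand side is \emph{not} controlled by $\theta t$ just because $w<\theta t$ off an exceptional set of measure $\nu|B_s|$: on that exceptional set $w$ could be enormous, and indeed its $L^\infty$ norm is exactly the quantity you are trying to bound. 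To close the estimate $t\le(1-c\mu)\theta t+\nu\|w\|_{L^\infty}$ you need an a priori bound on $\|w\|_{L^\infty}$, which is circular. A second, independent, problem is that the exterior density bound \ref{item:5-density} only yields $|B_{s}(\bar x)\setminus\Omega|\gtrsim\mu|B_s|$ when $\bar x$ is within distance $\lesssim s$ of $\partial\Omega$; if $B_{4s}(\bar x)\subset\Omega$ the exterior set is empty, interior Harnack only gives $w\ge c_{\mathcal H}w(\bar x)$ with $c_{\mathcal H}<1$, and no $\theta>1$ is available — so the spreading lemma is simply false in that regime. Finally, even granting the lemma, the dyadic bookkeeping you sketch is not trivial: the lemma gives a measure lower bound only in the shrinking ball $B_{s_k}$, so the advertised $|\{w>\theta^k\}|\gtrsim\lambda^k$ needs the scale decay rate $\gamma$ and the growth rate $\theta$ to be coupled to $\eps$ and $d$, and that coupling is exactly where the proof could silently fail.

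The paper's proof sidesteps all of this and is considerably lighter. It dichotomizes at a single scale. If $x_0$ is \emph{far} from $\partial\Omega$ relative to $w(x_0)^{-\eps/(2d)}$, interior Harnack spreads $w\gtrsim w(x_0)$ onto a ball whose radius is comparable to $w(x_0)^{-\eps/(2d)}$, and plugging this directly into $\int_{B_1}w^\eps\le1$ bounds $w(x_0)$ — no iteration, no measure decay. If $x_0$ is \emph{close} to $\partial\Omega$, the subharmonic mean value inequality over a ball $B_{w(x_0)^{-\eps/(2d)}}(x_0)$ that captures an exterior density portion forces $\|w\|_{L^\infty}$ over that ball to exceed $w(x_0)$ by a fixed factor $(1+2^{-d}\mu)$; this produces a \emph{single new point} $x_1$ with a strictly larger value and controlled displacement. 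Iterating, the displacements form a geometric series summing to $\le\sfrac14$ once $w(x_0)$ is large, so the chain stays in $\overline{B_{3/4}}$ while $w(x_n)\to\infty$, contradicting continuity. No Calder\'on--Zygmund decomposition and no superlevel-set measure estimates are needed; the $L^\eps$ bound is only invoked in the interior case, and the boundary case is resolved by continuity alone. You should replace the ``pointwise $\to$ measure'' spreading lemma with this ``pointwise $\to$ pointwise'' propagation of the supremum, which is both correct and much shorter.
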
	
\begin{proof}
Let $x_0\in B_{\sfrac12}\cap\Omega$, $R:=\text{\rm dist}(x_0,\partial\Omega)$ and $M:=w(x_0)>0$. We also fix $\delta:=\sfrac\eps{2d}$. We consider two cases. 	

\noindent{\bf Case 1.} Assume that $2R\ge M^{-\delta}$. Notice that, in $B_R(x_0)$ the function $w$ is harmonic (and positive). Thus, by the Harnack inequality in $B_R(x_0)$, there is a dimensional constant $c_{\mathcal H}>0$ such that
$$w\ge c_{\mathcal H} M\quad\text{in}\quad B_{R/2}(x_0).$$
But then,
$$1\ge \int_{B_1}w^\eps\,dx\ge \int_{B_{R/2}(x_0)}w^\eps\,dx\ge |B_{R/2}|(c_{\mathcal H}M)^\eps\ge \frac{\omega_d\,c_{\mathcal H}^\eps}{4^d}M^{-d\delta+\eps}= \frac{\omega_d\,c_{\mathcal H}^\eps}{4^d}M^{\sfrac{\eps}2},$$
which means that in this case there is a constant $C_{d,\eps}$ depending only on $d$ and $\eps$ such that $M\le C_{d,\eps}$.\smallskip

\noindent{\bf Case 2.} Suppose now that $2R\le M^{-\delta}$ and $M>C_{d,\eps}$, $C_{d,\eps}$ being the constant from the previous case.\\ Let $z_0$ be the projection of $x_0$ on $\partial\Omega\cap B_1$. Then, the ball $B_{M^{-\delta}}(x_0)$ contains $B_{M^{-\delta}/2}(z_0)$ and is contained in $B_1$. In particular, since $\Delta w\ge 0$ in $B_1$,
$$M=w(x_0)\le \frac1{|B_{M^{-\delta}}|}\int_{B_{M^{-\delta}}(x_0)}w(x)\,dx\le \frac{|B_{M^{-\delta}}(x_0)\cap\Omega|}{|B_{M^{-\delta}}|}\|w\|_{L^\infty(B_{M^{-\delta}}(x_0))},$$
which by the density estimate in the ball $B_{M^{-\delta}/2}(z_0)$ gives
$$M\le (1-2^{-d}\mu)\|w\|_{L^\infty(B_{M^{-\delta}}(x_0))} \le \frac1{1+2^{-d}\mu}\|w\|_{L^\infty(B_{M^{-\delta}}(x_0))},$$
which means that there exists a point $x_1\in B_{M^{-\delta}}(x_0)$ such that
$$w(x_1)\ge (1+2^{-d}\mu)M.$$
Iterating the same procedure, we obtain a sequence of points $x_n\in\Omega\cap B_1$ such that
$$w(x_{n+1})\ge (1+2^{-d}\mu)w(x_{n})\ge M(1+2^{-d}\mu)^n\qquad\text{and}\qquad |x_{n+1}-x_{n}|\le \frac{1}{M^\delta(1+2^{-d}\mu)^{n\delta}}.$$
Now, if we choose $M$ large enough, then
$$\sum_{n=0}^{+\infty}\frac{1}{M^\delta(1+2^{-d}\mu)^{n\delta}}\le \frac14,$$
so $x_n$ is defined for every $n\ge 1$ (it never leaves $\Omega\cap B_{\sfrac34}$). But this is impossible since $w(x_n)\to\infty$.
\end{proof}

\begin{proof}[\bf Proof of \cref{l:step2}]
	We first show that there are $\alpha>0$ and $C>0$ such that
	$$\int_{B_{\sfrac12}}w^\alpha\,dx\le C.$$
	Indeed, by \cref{l:growth2} and \ref{item:6-levels} of \cref{t:main}, we have that
	\begin{align*}
	\int_{B_{\sfrac12}}(C\phi^{-p})^\alpha\,dx&=C^\alpha p\alpha\int_0^{+\infty}t^{\alpha p-1}|\{\phi^{-1}>t\}\cap B_{\sfrac12}|\,dt\\
	&\le C^\alpha p\alpha\left(|B_{\sfrac12}|\int_0^1t^{\alpha p-1}\,dt+\int_1^{+\infty}t^{\alpha p-1}|\{\phi^{-1}>t\}\cap B_{\sfrac12}|\,dt\right)\\
	&\le C^\alpha p\alpha\left(\frac1{\alpha p}|B_{\sfrac12}|+\int_1^{+\infty}t^{\alpha p-1}|\{\phi<1/t\}\cap B_{\sfrac12}|\,dt\right)\\
	&\le C^\alpha p\alpha\left(\frac1{\alpha p}|B_{\sfrac12}|+|B_{\sfrac12}|\Lambda\int_1^{+\infty}t^{\alpha p-2}\,dt\right)=C^\alpha|B_{\sfrac12}|\left(1+\frac{\Lambda \alpha p}{1-\alpha p}\right),
	\end{align*}
	so it is sufficient to choose $\alpha=\frac1{2p}$. Now, the conclusion follows from \cref{l:krylov-safonov}.
\end{proof}	

\subsection{Step 3}\label{sub:step3}

We first show that we can choose the constants $M$ from \cref{t:main2} and a level $\delta$ in such a way that we can start the iterative procedure from \cref{l:step1}
\begin{lemma}\label{l:step3}	
	Suppose that $\Omega\subset B_1$, $0\in\partial\Omega$, and $\phi:B_1\to\R$ satisfy the conditions \ref{item:1-positivity}, \ref{item:2-lipschitz}, \ref{item:3-distance}, \ref{item:4-subharmonicity}, \ref{item:5-density} and \ref{item:6-levels} of \cref{t:main}. Let $R\in(0,R_0]$  where $R_0$ is the radius from \cref{l:interior-harnack}. Then, there are constants $C_\ast>0$ and $\delta\le \min\{\eta,\delta_1,\delta_2\}$\footnote{$\delta_1$ and $\delta_2$ are the constants from \cref{l:step1} and \cref{l:step2}, while $\eta$ is the constant from \ref{item:7-nondegeneracy} of \cref{t:main}.}, depending on the dimension $d$, the radius $R$, and the constants from \ref{item:2-lipschitz}, \ref{item:3-distance}, \ref{item:5-density} and \ref{item:6-levels}, such that for every couple
	$$u,v:B_1\to\R$$
	of nonnegative continuous functions satisfying
	$$\begin{cases}
	\Delta u=\Delta v=0\quad\text{in}\quad \Omega\cap B_1;\\
	u=v=0\quad\text{on}\quad B_1\setminus \Omega;\\
	u(P)=v(P)\quad\text{for some point}\quad P\in B_R\cap\{\phi>\delta R\},
	\end{cases}$$
	we have that
$$
C_\ast u-v \quad\mbox{and}\quad  C_\ast v - u
$$
fulfill the assumptions of \cref{l:step1}.
\end{lemma}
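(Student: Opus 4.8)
The plan is to choose a large constant $C_\ast$ and a small $\delta$ and to verify that $w:=C_\ast u-v$ meets the four requirements of \cref{l:step1}, after a rescaling by a factor proportional to $R$; the function $C_\ast v-u$ is then handled by the same argument with $u$ and $v$ interchanged. Replacing $(u,v)$ by $(\lambda u,\lambda v)$, $\lambda>0$, changes neither the equations satisfied by $u,v$ nor, up to relabelling the constants, the statement of \cref{l:step1}, so I would normalize $u(P)=v(P)=t_0$ with $t_0$ fixed at the end; moreover I may assume $u,v\not\equiv 0$, hence strictly positive on the connected component of $\Omega\cap B_1$ containing $P$ by the strong maximum principle. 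Of the four requirements, harmonicity of $w$ in $\{\phi>0\}$ and the identity $w\equiv 0$ on $\{\phi=0\}$ are immediate from \ref{item:1-positivity}; the substance is to obtain a large lower bound for $w$ on a bulk region of the form $\{\phi>\delta R\}$, and the lower bound $w\ge -1$ on the whole intersection of a small ball with $\Omega$.

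First I would run the two interior estimates. Since $R\le R_0$, applying \cref{l:interior-harnack} with parameters $\delta$ and $R$ to the positive harmonic functions $u$ and $v$, and using $P\in B_R\cap\{\phi>\delta R\}$, yields the two-sided bounds $c_{\mathcal H}\,t_0\le u,v\le c_{\mathcal H}^{-1}t_0$ on $B_R\cap\{\phi>\delta R\}$, with $c_{\mathcal H}=c_{\mathcal H}(\delta,R)>0$. Next, provided $\delta\le\delta_2$ one has $\{\phi\ge\delta_2R\}\cap B_R\subseteq\{\phi>\delta R\}\cap B_R$, so the function $x\mapsto c_{\mathcal H}t_0^{-1}v(Rx)$ satisfies the hypotheses of \cref{l:step2}; hence $v\le C_0\,t_0$ in $B_{R/4}$, with $C_0=C_0(d,L,\kappa,\mu,\Lambda,\delta,R)$ (the $R,\delta$–dependence only through $c_{\mathcal H}$), and the same bound holds for $u$. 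This is the one point of the proof where \ref{item:6-levels} enters, through \cref{l:step2} and \cref{l:growth2}.

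I would then fix $t_0=C_0^{-1}$, so that $u,v\le 1$ in $B_{R/4}$, while on $\{\phi>\delta R\}\cap B_{R/4}$ one has $c_{\mathcal H}C_0^{-1}\le u,v\le c_{\mathcal H}^{-1}C_0^{-1}$. Picking $C_\ast$ large, in terms of the constant $A$ from \cref{l:iteration}, of $c_{\mathcal H}$ and of $C_0$, so that $C_\ast\,c_{\mathcal H}C_0^{-1}-c_{\mathcal H}^{-1}C_0^{-1}\ge 2A$, one gets
$$C_\ast u-v\ge 2A\ \text{ on }\ \{\phi>\delta R\}\cap B_{R/4},\qquad C_\ast u-v\ge -v\ge -1\ \text{ in }\ B_{R/4},$$
and likewise for $C_\ast v-u$. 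By the scale invariance of \ref{item:1-positivity}--\ref{item:6-levels}, rescaling by $R/4$ turns these two inequalities, together with the two trivial requirements, into exactly the hypotheses of \cref{l:step1} with $M=2A$ and threshold $4\delta$; it therefore suffices to require also $4\delta\le\delta_1$, and to take $\delta$ small enough compared to $\eta$ that, by \ref{item:7-nondegeneracy}, $\{\phi>\delta R\}\cap B_{R/4}\ne\emptyset$ so that nothing is vacuous. All these constraints are met by taking $\delta$ a sufficiently small multiple of $\min\{\eta,\delta_1,\delta_2\}$, which in particular gives the required $\delta\le\min\{\eta,\delta_1,\delta_2\}$.

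I expect the main difficulty to be the bookkeeping of scales: \cref{l:interior-harnack} is available only on the small ball $B_R$, \cref{l:step2} upgrades a bulk bound to a genuine pointwise bound only on the quarter–ball $B_{R/4}$, and \cref{l:step1} must then be run at that smaller scale. One has to check that after the rescaling the bulk set is still nonempty and still carries the two-sided Harnack bounds, and — this is the reason the normalization matters — that the common value $u(P)=v(P)$ can be chosen ($=C_0^{-1}$) precisely so that the lower bound delivered to \cref{l:step1} is $-1$ and not the a priori larger constant $C_0$. Everything else is routine tracking of constants.
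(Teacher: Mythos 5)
Your proof is correct and follows the same strategy as the paper: it combines \cref{l:interior-harnack} for the two-sided bulk bound with \cref{l:step2} for the pointwise upper bound, then chooses $C_\ast$ large enough so that $C_\ast u - v$ and $C_\ast v - u$ meet the hypotheses of \cref{l:step1} after rescaling. The paper leaves the normalization $u(P)=v(P)$ and the $R/4$ rescaling implicit and is terser about the constants; your version makes these explicit (and correctly identifies that the bulk lower bound on $u$ is what matters, where the paper's line reading ``$u\le\lambda$'' appears to be a slip for ``$u\ge\lambda$''), but this is a clarification of the same argument rather than a different route.
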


\begin{proof}[\bf Proof of \cref{l:step3}]
Indeed, by \cref{l:interior-harnack}, there is a constant $C$ (depending also on $R$) such that 	
$$\frac1{C}\le u,v\le C\quad\text{on}\quad B_R\cap\{\phi>\delta R\}.$$
Thus, by \cref{l:step2}, there is a constant $\Lambda>0$ such that
$$v\le \Lambda\quad\text{in}\quad B_{\sfrac{R}{4}},$$
and a constant $\lambda>0$ such that
$$u\le \lambda\quad\text{in}\quad B_{\sfrac{R}{4}}\cap\{\phi>\frac{\delta}4 R\}.$$
Thus, for some $C_1>0$ large enough, the function $C_1 u-v$ satisfies the conditions of \cref{l:step1}.\\Relabeling the previous inequalities, we easily deduce the existence of $C_2>0$ large enough, such that $C_2 v - u$ satisfies the assumption of \cref{l:step1} too. Finally, the result follows by taking $C_\ast=\max\{C_1,C_2\}$.
\end{proof}		
	
\begin{proof}[\bf Proof of \cref{t:main2}] We first notice that, by choosing $R$ and $\delta$ small enough, we can apply \cref{l:step3} in a neighborhood of the origin.  Precisely, there are $R>\rho>0$ and $\delta$, such that
$$
\sup_{B_{R/4}(x_0)\cap \Omega}(C_\ast u-v)^- \leq a
\quad\mbox{and}\quad\inf_{B_{R/4}(x_0)\cap \{\phi>\frac{R}{4}\delta\}}(C_\ast u-v)^+ > 0,
$$
for every $x_0\in B_\rho$. Iterating \cref{l:step1} (up to a dilatation and rescaling), we get that for every $n\ge 0$
	 $$C_\ast u-v\ge 0\quad\text{in}\quad B_{r_n}(x_0)\cap \{\phi>r_n\delta\},$$
	 where $r_n:=R2^{-2-n}$. Now, it is sufficient to notice that for $\rho$ small enough the family of sets
	 $$\Big\{ B_{r_n}(x_0)\cap \{\phi>r_n\delta\}\ :\ x_0\in\partial\Omega\cap B_{\rho},\ n\ge 0\Big\},$$
	 is a covering of $B_\rho$. By repeating the same argument with $C_\ast v - u$, we get the claimed result.
\end{proof}

%

\section{H\"older continuity up to the boundary. Proof of \cref{t:main}}\label{s:proof-of-T2}

In this section, we show how the Boundary Harnack Inequality (\cref{t:main2}) implies that the ratio of two harmonic functions vanishing simultaneously on $\partial\Omega$ is H\"older continuous up to the boundary (\cref{t:main}).  Our main theorem is a consequence of the following proposition, which is well-known (see for instance \cite[Corollary 1.3.8]{K}); we give here the detailed proof for the sake of completeness.

\begin{proposition}\label{p:main}
	Let $\Omega\subset B_1$ be an open set with the following property.
	\begin{equation}\label{e:BH-half-space-property}
	\begin{cases}\begin{array}{ll}
	\quad \text{There is a constant $M>0$ such that for every $x_0\in\partial\Omega\cap B_{\sfrac12}$,}\\
	\text{every $r\in(0,1-|x_0|)$, there is a point $P_r(x_0)\in B_r(x_0)\cap \Omega$ for which the following holds.}\\	
	\text{For every pair of continuous non-negative functions}\smallskip\\
	\qquad\qquad\qquad\qquad \qquad u,v:B_r(x_0)\to\R\\
	\text{satisfying}\\
	\qquad\qquad\qquad\qquad
	\Delta u=\Delta v=0\quad\text{in}\quad B_r(x_0)\cap \Omega\,,\\
	\qquad\qquad\qquad\qquad u=v=0\quad\text{on}\quad B_r(x_0)\setminus \Omega\,,\\
	\qquad\qquad\qquad\qquad u(P_r(x_0))=v(P_r(x_0))\,,\\
	\text{we have that}\\
	\ds\qquad\qquad\qquad\frac1M\le \frac{u(x)}{v(x)}\le M\quad\text{for every}\quad x\in B_{\sfrac{r}2}(x_0)\cap \Omega.
	\end{array}	
	\end{cases}
	\end{equation}	
	Then, there are constants $\alpha>0$ and $C>0$, depending on $c$, $M$ and the dimension, such that for every pair of continuous non-negative functions $$u,v:B_1\to\R$$ satisfying
	\begin{equation}
	\label{e:BH-half-space-hypo}
	\begin{cases}
	\Delta u=\Delta v=0\quad\text{in}\quad B_1\cap \Omega\\
	u=v=0\quad\text{on}\quad B_1\setminus\Omega\\
	u(P_1(0))=v(P_1(0))>0\,,
	\end{cases}
	\end{equation}
	the following H\"older estimate holds
	\begin{equation}\label{e:BH-half-space-conclusion}
	\left|\frac{u(x)}{v(x)}-\frac{u(y)}{v(y)}\right|\le C|x-y|^\alpha\qquad\text{for every}\qquad x,y\in B_{\sfrac14}\cap\Omega\,.
	\end{equation}
\end{proposition}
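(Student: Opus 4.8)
The plan is to derive the global Hölder estimate \eqref{e:BH-half-space-conclusion} from the scale-invariant oscillation decay that the property \eqref{e:BH-half-space-property} yields for the ratio $u/v$. Set $w:=u/v$, which is well-defined and continuous on $B_1\cap\Omega$; the heart of the matter is to show that the oscillation of $w$ on the sets $B_r(x_0)\cap\Omega$ decays geometrically in $r$, both when $x_0\in\partial\Omega$ and when $x_0$ is an interior point, and then to glue these two situations together by a standard argument.

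First I would treat the boundary case. Fix $x_0\in\partial\Omega\cap B_{\sfrac14}$ and $r\in(0,\sfrac12)$, and let $P=P_r(x_0)$ be the comparison point from \eqref{e:BH-half-space-property}. After multiplying $v$ by the positive constant $u(P)/v(P)$ (which changes neither the hypotheses nor the ratio $u/v$) we may assume $u(P)=v(P)$, so that \eqref{e:BH-half-space-property} gives $\frac1M\le u/v\le M$ on $B_{\sfrac r2}(x_0)\cap\Omega$. Now apply \eqref{e:BH-half-space-property} at scale $\sfrac r2$ to the pair $\bigl(u,\,v\cdot\tfrac{u(P')}{v(P')}\bigr)$ where $P'=P_{\sfrac r2}(x_0)$: writing $m:=\inf_{B_{\sfrac r2}(x_0)\cap\Omega}w$ and $N:=\sup_{B_{\sfrac r2}(x_0)\cap\Omega}w$, one of the two functions $u-mv$ or $Nv-u$ is a nonnegative harmonic function vanishing on $B_{\sfrac r2}(x_0)\setminus\Omega$, and comparing it against $v$ via \eqref{e:BH-half-space-property} at the next scale forces
$$\osc_{B_{\sfrac r4}(x_0)\cap\Omega} w\le (1-\gamma)\,\osc_{B_{\sfrac r2}(x_0)\cap\Omega} w$$
for a constant $\gamma=\gamma(M)\in(0,1)$. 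Iterating, $\osc_{B_\rho(x_0)\cap\Omega}w\le C\rho^{\alpha_0}$ for some $\alpha_0>0$, for every $x_0\in\partial\Omega\cap B_{\sfrac14}$.

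Next I would handle two interior points $x,y\in B_{\sfrac14}\cap\Omega$. Let $d_x=\dist(x,\partial\Omega)$, $d_y=\dist(y,\partial\Omega)$, and set $s=|x-y|$. If $s$ is comparable to, or larger than, $\max\{d_x,d_y\}$, then picking a boundary point near $x$ one bounds $|w(x)-w(y)|$ by the boundary oscillation decay applied on a ball of radius $\simeq s$ centered at that boundary point, which contains both $x$ and $y$. If instead $s\ll d_x$ (so also $d_y\simeq d_x$), then $u$ and $v$ are positive harmonic in $B_{d_x/2}(x)\subset\Omega$; the interior Harnack inequality gives that $u$ and $v$ are each comparable to their values at $x$ on the smaller ball $B_{d_x/4}(x)$, so $w$ is comparable to $w(x)$ there, and then the interior gradient estimate for the harmonic-like quotient — or more simply, writing $w = u/v$ with $v$ bounded below and $u,v$ harmonic, an interior $C^{1}$ estimate on the ball $B_{d_x/2}(x)$ — yields $|w(x)-w(y)| \lesssim (s/d_x)\,\osc_{B_{d_x/2}(x)\cap\Omega}w \lesssim (s/d_x)\,d_x^{\alpha_0}\le s^{\alpha_0}$, using again the boundary decay to control the oscillation on $B_{d_x/2}(x)$ by $d_x^{\alpha_0}$. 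Combining the three regimes and taking $\alpha\le\alpha_0$ produces \eqref{e:BH-half-space-conclusion}; the positivity $u(P_1(0))=v(P_1(0))>0$ in \eqref{e:BH-half-space-hypo} guarantees, via \eqref{e:BH-half-space-property}, that $v>0$ on $B_{\sfrac12}\cap\Omega$ so that $w$ is genuinely defined there.

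The main obstacle is the bookkeeping in the iteration of step one: each application of \eqref{e:BH-half-space-property} at a new scale requires renormalizing $v$ by a new constant $u(P_r)/v(P_r)$, and one must check that the oscillation-reduction constant $\gamma$ depends only on $M$ (and not on the scale or the particular functions), so that the iteration closes uniformly. The other delicate point is the passage between the boundary regime and the deep-interior regime for the pair $x,y$ — making sure the radii chosen always sit inside $B_1$ and that the interior estimates are applied on balls fully contained in $\Omega$, which is exactly why one splits on whether $|x-y|$ is small or large compared to the distances to $\partial\Omega$. Everything else is the standard ``oscillation decay $\Rightarrow$ Hölder'' dyadic summation.
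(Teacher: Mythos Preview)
Your proposal is correct and follows essentially the same route as the paper: the core step is the oscillation decay for $w=u/v$ at boundary points, obtained by comparing $u-mv$ (respectively $Nv-u$) against $v$ via the Boundary Harnack property \eqref{e:BH-half-space-property}; the paper isolates this as Lemma~\ref{l:mainfin}, with the choice between the two auxiliary functions made according to whether $w(P_r)$ lies above or below $\tfrac{m+N}{2}$ (both functions are in fact nonnegative, so your phrase ``one of the two'' should be read as this dichotomy). The only organizational difference is in the interior: rather than splitting the estimate of $|w(x)-w(y)|$ into the regimes $|x-y|\ll d_x$ and $|x-y|\gtrsim d_x$ and invoking a $C^1$ bound on $w$, the paper proves a single oscillation-decay claim valid at every center $x_0\in\overline\Omega\cap B_{1/2}$, using the interior Harnack inequality (applied to $u-mv$ and $v$ separately, since $w$ itself is not harmonic) when $B_{r/8}(x_0)\subset\Omega$ and Lemma~\ref{l:mainfin} otherwise---this sidesteps the gradient estimate entirely.
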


In order to prove the proposition it is sufficient to estimate the oscillation of $\frac{u}{v}$ from one scale to another. The main lemma is the following.
\begin{lemma}\label{l:mainfin}
	Let $\Omega\subset B_1$ be an open set with the property \eqref{e:BH-half-space-property}. Then, for every $x_0\in\partial\Omega\cap B_{\sfrac12}$, every $r\le \sfrac12$, and every pair of continuous and non-negative functions $u,v:B_r(x_0)\to\R$ satisfying
$$\Delta u=\Delta v=0\quad\text{in}\quad B_r(x_0)\cap \Omega\ ,\qquad u=v=0\quad\text{on}\quad B_r(x_0)\setminus \Omega,$$
we have that
$$\osc_{\Omega\cap B_{\sfrac{r}2}(x_0)}\frac{u}{v}\le \Big(1-\frac1{2M}\Big)\osc_{\Omega\cap B_r(x_0)}\frac{u}{v},$$
where $M$ is the constant from \eqref{e:BH-half-space-property}.
\end{lemma}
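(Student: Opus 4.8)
The inequality to prove is the classical oscillation-decay estimate for the ratio $u/v$, and the natural approach is to apply the Boundary Harnack Inequality hypothesis \eqref{e:BH-half-space-property} to a suitably normalized pair built from $u$ and $v$. First I would rescale to the unit ball: set $\tilde u(x) = u(x_0 + rx)$ and $\tilde v(x) = v(x_0+rx)$, so that the hypothesis applies on $B_1$ with the point $P:=P_1(0)$. If $v$ is identically zero on $B_r(x_0)\cap\Omega$ then $u/v$ is not defined and there is nothing to prove (or the oscillation is vacuous), so I may assume $v>0$ in $\Omega\cap B_r(x_0)$; by Harnack $u/v$ is a well-defined continuous function there.

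\textbf{Setting up the auxiliary functions.} Write $m := \inf_{\Omega\cap B_r(x_0)} \frac uv$ and $\mathfrak{M} := \sup_{\Omega\cap B_r(x_0)}\frac uv$, so $\osc = \mathfrak M - m$. The two functions
$$ u_1 := u - m\,v \qquad\text{and}\qquad u_2 := \mathfrak M\, v - u $$
are both nonnegative and harmonic in $\Omega\cap B_r(x_0)$ and vanish on $B_r(x_0)\setminus\Omega$. The key normalization is to replace $u_1,u_2$ by positive multiples so that they agree at the comparison point $P_r(x_0)$: unless one of them vanishes identically (a degenerate case handled separately, giving $\osc = 0$ on the smaller ball), set
$$ \bar u := \frac{u_1}{u_1(P_r(x_0))}, \qquad \bar v := \frac{u_2}{u_2(P_r(x_0))}, $$
so $\bar u(P_r(x_0)) = \bar v(P_r(x_0))$. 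Applying \eqref{e:BH-half-space-property} gives $\frac1M \le \bar u/\bar v \le M$ on $B_{r/2}(x_0)\cap\Omega$, i.e. there is a constant $\lambda>0$ (namely $\lambda = u_1(P_r(x_0))/u_2(P_r(x_0))$) with
$$ \frac{1}{M}\,\lambda\, u_2 \le u_1 \le M\,\lambda\, u_2 \quad\text{on } B_{r/2}(x_0)\cap\Omega. $$

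\textbf{Extracting the oscillation decay.} Dividing through by $v$ and unwinding the definitions of $u_1,u_2$, the inequality $u_1 \le M\lambda\, u_2$ reads $\frac uv - m \le M\lambda\big(\mathfrak M - \frac uv\big)$, hence $\frac uv \le \frac{m + M\lambda\mathfrak M}{1 + M\lambda}$ on $B_{r/2}(x_0)\cap\Omega$; similarly $\frac{1}{M}\lambda u_2 \le u_1$ gives a lower bound $\frac uv \ge \frac{M m + \lambda \mathfrak M}{M + \lambda}$ there. Subtracting, the new oscillation on $B_{r/2}(x_0)$ is at most
$$ \Big(\mathfrak M - m\Big)\left(\frac{M\lambda}{1+M\lambda} - \frac{\lambda}{M+\lambda}\right) = (\mathfrak M - m)\,\frac{\lambda(M^2-1)}{(1+M\lambda)(M+\lambda)}. $$
A short calculus check shows the factor $\frac{\lambda(M^2-1)}{(1+M\lambda)(M+\lambda)}$ is maximized over $\lambda>0$ at $\lambda = 1/\sqrt{?}$... in fact one optimizes and finds it is bounded by $\frac{M-1}{M+1} \le 1 - \frac{1}{2M}$ (the bound $\frac{M-1}{M+1}\le 1-\frac1{2M}$ holds for $M\ge 1$ since it rearranges to $2M \le (M+1)^2$, i.e. $0\le M^2+1$). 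I should double-check the optimization: writing $g(\lambda) = \frac{(M^2-1)\lambda}{(M+\lambda)(1+M\lambda)}$, we have $g'(\lambda)=0$ at $\lambda^2 = 1$, wait — at $\lambda = M/M = 1$ by AM–GM on the denominator, giving $g(1) = \frac{(M^2-1)}{(M+1)^2} = \frac{M-1}{M+1}$, and this is the maximum. That is the crux and the only genuinely non-formal step.

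\textbf{Main obstacle.} The only subtlety is the degenerate cases: if $u_1 \equiv 0$ then $u = mv$ on a component, forcing the oscillation to be zero on that component; one must check the oscillation statement still holds, which is immediate since then $u/v \equiv m$ there, and similarly if $u_2\equiv 0$. If $v\equiv 0$ near $x_0$ there is nothing to prove. Also one should make sure $P_r(x_0)\in\Omega$ so that $u_1(P_r(x_0)), u_2(P_r(x_0))$ are strictly positive unless the functions vanish identically — this follows from Harnack and the hypothesis that $P_r(x_0)\in B_r(x_0)\cap\Omega$. Modulo these routine case distinctions, the proof is the computation above.
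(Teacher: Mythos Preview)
Your proof is correct, but it takes a different route from the paper's. The paper compares $u-m_rv$ (or $M_rv-u$) directly with $v$: it splits into two cases according to whether $\frac{u(P_r)}{v(P_r)}$ lies above or below the midpoint $\frac{M_r+m_r}{2}$, and in each case only one application of \eqref{e:BH-half-space-property} (with the pair $(u-m_rv,v)$ or $(M_rv-u,v)$, normalized at $P_r$) immediately gives a one-sided improvement of $\frac{1}{2M}(M_r-m_r)$ on either the infimum or the supremum, yielding the factor $1-\frac{1}{2M}$ with no optimization. You instead compare $u_1=u-mv$ with $u_2=\mathfrak{M}v-u$, which avoids the case split but introduces the free parameter $\lambda=u_1(P_r)/u_2(P_r)$ and forces the optimization of $g(\lambda)=\frac{(M^2-1)\lambda}{(1+M\lambda)(M+\lambda)}$; the payoff is the sharper decay factor $\frac{M-1}{M+1}$, which you then bound above by $1-\frac{1}{2M}$. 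Both arguments are standard; the paper's is shorter, yours gives a better constant. One small correction: your rearrangement ``$2M\le (M+1)^2$'' is not the right inequality --- the correct rearrangement of $\frac{M-1}{M+1}\le 1-\frac{1}{2M}$ is $M+1\le 4M$, which is clear for $M\ge 1$.
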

\begin{proof}[Proof of \cref{l:mainfin}]
For simplicity, we set
$$P_r:=P_r(x_0)\ ,\qquad M_r:=\sup_{\Omega\cap B_r(x_0)}\frac{u}{v}\qquad\text{and}\qquad m_r:=\inf_{\Omega\cap B_r(x_0)}\frac{u}{v}\,.$$
Suppose that $\ds \frac{u(P_r)}{v(P_r)}\ge \frac{M_r+m_r}{2}$. Then, the functions
$u-m_rv$ and $v$
are harmonic and non-negative in $B_r(x_0)\cap\Omega$ and satisfy
$$u(P_r)-m_r v(P_r)\ge \frac{M_r-m_r}2v(P_r).$$
Thus, by the hypothesis \eqref{e:BH-half-space-property}, we have
$$u-m_r v\ge \frac1M\frac{M_r-m_r}2v\qquad\text{in}\qquad B_{r/2}(x_0),$$
where $M$ is the constant from \eqref{e:BH-half-space-property}.
Thus,
$$\inf_{\Omega\cap B_{r/2}(x_0)}\frac{u}{v}\ge m_r+\frac1M\frac{M_r-m_r}2,$$
and so
$$\osc_{\Omega\cap B_{r/2}(x_0)}\frac{u}{v}\le M_r-\left( m_r+\frac1M\frac{M_r-m_r}2\right)=(M_r-m_r)\left(1-\frac1{2M}\right).\smallskip$$
Analogously, if $\ds \frac{u(P_r)}{v(P_r)}\le \frac{M_r+m_r}{2}$, then
$$M_r v-u\ge \frac1M\frac{M_r-m_r}2v\qquad\text{in}\qquad B_{r/2}(x_0),$$
which implies that
$$\sup_{\Omega\cap B_{r/2}(x_0)}\frac{u}{v}\le M_r-\frac1M\frac{M_r-m_r}2,$$
and
$$\osc_{\Omega\cap B_{r/2}(x_0)}\frac{u}{v}\le \left(M_r-\frac1M\frac{M_r-m_r}2\right)-m_r=(M_r-m_r)\left(1-\frac1{2M}\right).$$
which concludes the proof of \cref{l:mainfin}.
\end{proof}	

\begin{proof}[Proof of \cref{p:main}] We will prove the following claim.
\begin{equation}\label{e:BH-half-space-claim2}
\begin{cases}
\begin{array}{ll}
\text{There is a constant $c\in(0,1)$ such that for every $x_0\in\overline\Omega\cap B_{\sfrac12}$, every $r\le \sfrac12$,}\\
\text{and every pair of continuous and non-negative functions}\smallskip\\
\qquad\qquad\qquad\qquad\qquad\qquad \qquad u,v:B_r(x_0)\to\R\\
\text{satisfying}\\
\qquad\qquad\qquad\qquad
\Delta u=\Delta v=0\quad\text{in}\quad B_r\cap \Omega\ ,\qquad u=v=0\quad\text{on}\quad B_r\setminus \Omega,\\
\text{we have that}\\
\ds\qquad\qquad\qquad\qquad\qquad\qquad\osc_{\Omega\cap B_{\sfrac{r}{16}}(x_0)}\frac{u}{v}\le (1-c)\osc_{\Omega\cap B_r(x_0)}\frac{u}{v}.
\end{array}	
\end{cases}
\end{equation}	
In order to prove \eqref{e:BH-half-space-claim2}, we consider two cases. \smallskip

Suppose that there is a point $y_0\in \partial\Omega\cap B_{r/8}(x_0)$. Then, we have that $B_{r/2}(y_0)\subset B_r(x_0),$
and by \cref{l:mainfin}, we have
$$\osc_{B_{r/4}(y_0)\cap\Omega}\frac{u}{v}\le \Big(1-\frac1{2M}\Big)\osc_{B_{r/2}(y_0)\cap\Omega}\frac{u}{v}\le \Big(1-\frac1{2M}\Big)\osc_{B_{r}(x_0)\cap\Omega}\frac{u}{v}.$$
Now, since $B_{r/8}(x_0)\subset B_{r/4}(y_0)$, we get that
$$\osc_{B_{r/8}(x_0)\cap\Omega}\frac{u}{v}\le (1-c) \osc_{B_{r}(x_0)\cap\Omega}\frac{u}{v}\qquad\text{with}\qquad c=\frac1{2M}.\smallskip$$
Conversely, suppose that $B_{r/8}(x_0)\subset \Omega$. Then, by the classical (interior) Harnack inequality, we have
$$\osc_{B_{r/16}(x_0)\cap\Omega}\frac{u}{v}\le  (1-c_{\mathcal H})\osc_{B_{r/8}(x_0)\cap\Omega}\frac{u}{v}\le (1-c_{\mathcal H})\osc_{B_{r}(x_0)\cap\Omega}\frac{u}{v},$$
where $c_{\mathcal H}\in (0,1)$ is a dimensional constant. This concludes the proof of \eqref{e:BH-half-space-claim2}.
The H\"older estimate  \eqref{e:BH-half-space-conclusion} now follows by a standard argument.	
\end{proof}

\section{Applications}\label{sub:applications} In this section we briefly discuss two examples of domains satisfying the conditions of \cref{t:main}.

\subsection{The vectorial free boundary problem}\label{subsub:vectorial} Let $B_1\subset\R^d$. For every vector-valued function $U:B_1\to\R^k$ we define the functional
$$\mathcal F(U):=\int_{B_1}|\nabla U|^2\,dx+\big|\{|U|>0\}\big|.$$
We say that a function $U:B_1\to\R^k$ is a (variational) solution of the vectorial problem if it minimizes $\mathcal F$ among all $\R^k$-valued functions with prescribed values on $\partial B_1$. We say that $U=(u_1,\dots,u_k)$ is {\it non-degenerate}, if there is a component, say $u_1$, which is strictly positive in $\{|U>0|\}\cap B_1$.
If this is not the case, we say that $U$ is {\it degenerate}. The non-degenerate case was first studied in \cite{CSY, krli, mtv}, while the regularity of the flat free boundaries in the degenerate case was first obtained in \cite{krli2}; see also \cite{desilva-tortone} for a different approach and \cite{spve} for an analysis of the singular part of the free boundaries in dimension two.  \smallskip

We notice that the proofs in \cite{CSY, mtv,mtv2}, of the $C^{1,\alpha}$ regularity of the flat free boundaries, are all based on the Boundary Harnack principle, which allows to transform the free boundary condition
$$\sum_{j=1}^k|\nabla u_j|^2=1\quad\text{on}\quad\partial\{|U>0|\}\cap B_1,$$
into a condition of the form
$$|\nabla u_j|=g(x)\quad\text{on}\quad\partial\{|U>0|\}\cap B_1,$$
involving just one of the components of $U$ and an auxiliary H\"older continuous function $g:\partial\Omega\to\R$. In order to prove that the Boundary Harnack principle holds on $\Omega_U:=\{|U|>0\}$, in \cite{CSY} it was shown that $\Omega_U$ is an NTA domain, while in \cite{mtv} it was proved that $\Omega_U$ is Reifenberg-flat; in both cases the conclusion followed from \cite{JK}. In this case \cref{t:main} offers an alternative approach. In fact, the modulus $|U|$ of a variational solution $U$ satisfies the conditions of \cref{t:main}. In fact, \ref{item:1-positivity} and \ref{item:4-subharmonicity} are clearly satisfied. For the Lipschitz continuity \ref{item:2-lipschitz} and the non-degeneracy \ref{item:7-nondegeneracy} of $|U|$ we refer to \cite{mtv}, while \ref{item:6-levels} was proved in \cite[Section 2.2]{mtv2}.
Moreover, in the non-degenerate case, in \cite{krli} it was shown that up to a constant one can bound $|U|$ from above with $u_1$. Thus, \ref{item:3-distance} is an immediate consequence from the classical interior Harnack inequality and the non-degeneracy of $|U|$. Finally, the exterior density estimate \ref{item:5-density} was proved in \cite{mtv}.
%

\subsection{Subsolutions and supersolutions}\label{sub:inwards-otwards}
For every $\Lambda>0$ and every non-negative function $u:B_1\to\R$ we define the functional
$$\mathcal F_\Lambda(u):=\int_{B_1}|\nabla u|^2\,dx+\Lambda\big|\{u>0\}\big|.$$
We say that $u$ is a supersolution (subsolution) of $\mathcal F_\Lambda$, if
$$\mathcal F_\Lambda(u)\le \mathcal F_\Lambda(v),$$
for every non-negative $v:B_1\to\R$ with the same boundary data as $u$ and such that $u\le v$ ($u\ge v$) in $B_1$. It is easy to show that if $u$ is at the same time a sub- and a supersolution of $\mathcal F_\Lambda$, then $u$ is actually a minimizer of $\mathcal F_\Lambda$ and so, by the classical result of Alt and Caffarelli \cite{alca}, the free boundary $\partial\{u>0\}$ is smooth in $B_1$ up to a set of small Hausdorff dimension. On the other hand, if $u$ is a subsolution for some $\mathcal F_\lambda$ and a supersolution for some $\mathcal F_\Lambda$, then nothing is known about the local structure of the free boundary. Still, from the analysis in \cite[Sections 3 and 4]{alca} (see also \cite{velectures}), one can easily check that we have the following result.
\begin{proposition}\label{p:sub-super}
Suppose that $0<\lambda<\Lambda$ are two constants and that $u\in H^1(B_1)$ is a non-negative function, which is a subsolution for $\mathcal F_\lambda$ and a supersolution for $\mathcal F_\Lambda$. Then, $u$ satisfies the conditions \ref{item:1-positivity}, \ref{item:2-lipschitz}, \ref{item:3-distance}, \ref{item:4-subharmonicity}, \ref{item:5-density}, \ref{item:6-levels} and \ref{item:7-nondegeneracy} of \cref{t:main} and the Boundary Harnack principle holds on the set $\Omega_u=\{u>0\}\cap B_1$.
\end{proposition}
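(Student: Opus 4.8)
The plan is to verify that a function $u$ which is simultaneously a subsolution for $\mathcal F_\lambda$ and a supersolution for $\mathcal F_\Lambda$ satisfies each of the seven hypotheses of \cref{t:main}, and then invoke \cref{t:main} itself. Properties \ref{item:1-positivity} and \ref{item:4-subharmonicity} are essentially immediate: positivity is the definition of $\Omega_u$, while the fact that $u$ being a subsolution for $\mathcal F_\lambda$ forces $\Delta u\ge 0$ in $B_1$ (competing with $\max\{u,h\}$ for $h$ harmonic) gives subharmonicity; one must only note that $u\equiv 0$ on $B_1\setminus\Omega_u$ by definition. So the substance is in the remaining five properties.

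First I would treat the Lipschitz bound \ref{item:2-lipschitz} and the non-degeneracy \ref{item:7-nondegeneracy}. These are the standard interior estimates for one-phase free boundary problems: the supersolution property for $\mathcal F_\Lambda$ gives that $u$ grows at most linearly from the free boundary (Lipschitz continuity, as in \cite[Section 3]{alca}), while the subsolution property for $\mathcal F_\lambda$ gives the linear lower bound $\sup_{B_r(x_0)}u\ge \eta r$ for free boundary points (non-degeneracy, as in \cite[Section 3]{alca}); both proofs are by comparison with harmonic replacements and a careful use of the volume term, and the constants depend only on $\lambda,\Lambda,d$. Next, the exterior density estimate \ref{item:5-density}, i.e. $|B_r(x_0)\setminus\Omega_u|\ge\mu|B_r(x_0)|$, is precisely the classical ``density of the zero set'' estimate for subsolutions, proved in \cite[Section 4]{alca} again by a comparison argument: if the bad set were too small, replacing $u$ by its harmonic extension in $B_r(x_0)$ would strictly decrease $\mathcal F_\lambda$.

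For the distance bound \ref{item:3-distance}, $\phi=u\ge\kappa\,\mathrm{dist}_{B_1\setminus\Omega_u}$ in $B_{1/2}$, I would combine non-degeneracy with the interior Harnack inequality: given $x\in\Omega_u\cap B_{1/2}$ with $\mathrm{dist}(x,\partial\Omega_u)=r$, non-degeneracy applied in a ball of radius $\sim r$ centered at the nearest free boundary point, together with Harnack chains inside $B_r(x)\subset\Omega_u$, yields $u(x)\ge\kappa r$. Finally, the level-set measure bound \ref{item:6-levels}, $|\{0<\phi<rt\}\cap B_r(x_0)|\le\Lambda t|B_r|$, follows from the Lipschitz bound combined with the non-degeneracy in a Vitali-type covering argument — exactly the kind of estimate established in \cite[Section 2.2]{mtv2} in the vectorial setting, which adapts verbatim here because it only uses the scale-invariant Lipschitz and non-degeneracy properties together with the exterior density estimate. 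Once all seven properties are checked with constants depending only on $\lambda$, $\Lambda$ and $d$, \cref{t:main} applies and the Boundary Harnack Principle holds on $\Omega_u$.

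The main obstacle is not any single deep argument but rather assembling the right comparison-function estimates: the most delicate point is the level-set bound \ref{item:6-levels}, since it is the one property genuinely requiring a combination of the upper (Lipschitz) and lower (non-degeneracy, exterior density) controls in a quantitative covering argument; everything else is a direct citation of the Alt–Caffarelli technology. One should also be careful that the comparison arguments from \cite{alca} are stated for minimizers, so each needs to be re-examined to confirm that only the relevant one-sided (sub- or super-) competitor is used — which is indeed the case, as already noted in \cite{velectures}.
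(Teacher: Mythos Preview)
Your proposal is correct and matches the paper's approach: the paper does not actually give a proof of this proposition, but simply states it after remarking that the properties follow ``from the analysis in \cite[Sections 3 and 4]{alca} (see also \cite{velectures})''. Your sketch fleshes out exactly this citation, attributing \ref{item:2-lipschitz}, \ref{item:3-distance}, \ref{item:5-density}, \ref{item:7-nondegeneracy} to the Alt--Caffarelli estimates, \ref{item:6-levels} to the covering argument in \cite[Section 2.2]{mtv2}, and correctly flagging that each comparison argument must be checked to use only the relevant one-sided competitor --- which is precisely the content of the parenthetical reference to \cite{velectures}. One small slip: the competitor $\max\{u,h\}$ you mention for subharmonicity is $\ge u$ and hence admissible for the \emph{super}solution property, not the subsolution one; but the conclusion $\Delta u\ge 0$ in $B_1$ is still correct (subsolution gives $\Delta u\ge 0$ in $\{u>0\}$ via $v=u-t\varphi$, and together with $u\ge 0$, $u=0$ on the complement, and continuity this yields global subharmonicity).
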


\end{document}